\begin{document}
\title*{Adaptive Density Estimation on the Circle by Nearly-Tight Frames}
\author{Claudio Durastanti\thanks{%
This research is supported by DFG Grant n.2131%
} }
\institute{Claudio Durastanti \at Ruhr Universit\"at, D44780, Bochum\\ \email{claudio.durastanti@gmail.com}}
\maketitle
\abstract{This work is concerned with the study of asymptotic properties of nonparametric density estimates in the framework of circular data. The estimation procedure here applied is based on wavelet
thresholding methods: the wavelets used are the so-called
Mexican needlets, which describe a nearly-tight frame on the circle.
We study the asymptotic behaviour of the $L^{2}$-risk function
for these estimates, in particular its adaptivity, proving that its rate of convergence is nearly optimal.
\\ \\ \textit{AMS 2010 Subject Classification - primary: 62G07; secondary: 62G20, 65T60, 62H11.}}
\keywords{Density estimation, circular and directional data,
thresholding, Mexican needlets, nearly-tight frames.}
\section{Introduction}
In this work, we aim to study nonparametric estimation of a density
function $F$ based on directional data, sampled over the unit circle $%
\mathbb{S}^{1}$, by thresholding techniques, focussing in particular on the adaptivity for the associated $L^2$, the so-called mean integrated squared error. The estimators are built over a wavelet system, namely the Mexican needlets, which describes a nearly tight frame on $\mathbb{S}^1$ and characterized by strong localization in the spatial domain. 
Directional data over $\mathbb{S}^{1}$ can be viewed as angles measured with respect to a
fixed starting point, the origin, and a fixed positive direction. They can be described
as a set of points $\left\{ X_{i},i=1,...,n\right\} $, lying on the
circumference of $\mathbb{S}^{1}$: for this reason, they are also called
circular data. Circular data are characterized by the $2\pi $%
-periodicity, which has led to the development of a huge set of circular
statistical methods, independently from the standard real-line statistics.
These investigations can be motivated also in view of the large number of
applications in many different fields, as for instance geophysics,
oceanography and engineering. The textbooks \cite{rao, fisher} can provide a
complete overview on this topic and further technical details (see also \cite%
{bhatta, silverman}), while some applications of interest can be found in 
\cite{alial, dimarzio, kato, klemklem, wu}.

\subsection{Overview}
In the recent years, the literature concerning density estimation
problems is becoming more and more abundant: in particular, we are referring
to the study of the adaptivity results for $L^{2}$-risks in the nonparametric framework.
Consider a function $F$ belonging to some scale of classes $\mathcal{F}_\alpha$, called nonparametric regularity class of functions and depending on a set of parameters $\alpha \in A$, and its estimator $\widehat{F}$: this estimator is said to be adaptive for the $L^2$-risk and for $\mathcal{F}_\alpha$, if for any $\alpha \in A$ there exists a constant
$c_\alpha$ such that 
\begin{equation*}
\left\| \widehat{F}-F \right\|^2_{L^2\left(\mathbb{S}^1\right)} \leq c_\alpha R_n \left(\widehat{F},\mathcal{F}_\alpha \right),
\end{equation*}
where $n$ is the number of sampled data and $R_n \left(\widehat{F},\mathcal{F}_\alpha\right)$ is, loosely speaking, the worst possible performance over $\mathcal{F}_\alpha$. it is said to be minimax if $R_n \left(\widehat{F},\mathcal{F}_\alpha\right) = \inf_{F}\sup_{\widehat{F}\in \mathcal{F}_\alpha}\left\| \widehat{F}-F \right\|^2_{L^2\left(\mathbb{S}^1\right)}$, where $F$ ranges over all measurable functions of the observations $\left\{X_i,i=1,\ldots,n\right\}$. \\
Nonparametric minimax estimation of unknown densities or regression
functions was presented in the seminal paper \cite{donoho}, see also \cite{donoho1}: in this work,
optimal minimax rates of convergence of the $L^{2}$-risk were obtained by
nonlinear wavelet estimators based on thresholding techniques. Since then on,
many applications were developed not only in Euclidean spaces but also in
more general manifolds: we suggest as textbook reference \cite{WASA}. As far
as data on the unit $q$-dimensional sphere $\mathbb{S}^{q}$ are concerned,
many of those researches have been developed by using the constructions of
second-generation wavelets on $\mathbb{S}^{q}$ named spherical needlets. The
spherical needlets, introduced in the literature by \cite{npw1, npw2},
feature properties fundamental to attain the minimax optimal rates of
convergence of the estimates, such as their concentration in both Fourier
and space domains: density estimation of directional data on $\mathbb{S}^{q}$
was presented in \cite{bkmpAoSb}, the analysis of nonparametric regression
on sections of spin fiber bundles on $\mathbb{S}^{2}$ by the means of spin
needlets was proposed in \cite{dgm} and, finally, nonparametric regression
estimators on the sphere based respectively on needlet block and global thresholding were studied in 
\cite{durastanti2} and \cite{durastanti6}.

\subsection{Motivations and comparisons with standard needlets}
The main result here established concerns nearly-optimal rates of convergence for the $L^2$-risk of nonparametric density
estimation based on wavelet coefficients on $\mathbb{S}^{1}$. The wavelets considered are
the so-called Mexican needlets, introduced on general compact manifolds in \cite{gm0, gm1, gm2, gm3}, see also \cite{gelpes, pesenson}. These wavelets are
known to enjoy very good localization properties in the real domain, as
described in details below in Section \ref{secnearly} (see also \cite%
{durastanti1}), while their support is not bounded in the harmonic domain,
on the contrary of standard needlets. Furthermore, while standard needlets
are built by using a set of exact cubature points and weights (cfr. \cite%
{npw1}), Mexican needlets are built over a set of points satisfying weaker
restrictions (see \cite{gm2} and Theorem \ref{nearlytight} below). Indeed,
Mexican needlets can be built over any partition over their spatial support with area
monotonically decreasing with the resolution level. In this sense,
statistical techniques adopting Mexican needlets are more immediately
applicable for computational developing: some examples of their practical
applications in the field of statistics can be found, for instance, in \cite{dll, durastanti5, lanmar2, mayeli, scodeller}. 
On the other hand, Mexican needlets lack
an exact reconstruction formula, so that the corresponding density
estimators are biased. The main purpose of this work is to show that
thresholding procedures built on Mexican needlets behave asymptotically as
those constructed with standard needlets (cfr. \cite{bkmpAoSb}), on the
other hand offering advantages both from the practical and the theoretical
points of view, such as the easier construction of the wavelets over
partitions on $\mathbb{S}^{1}$ and the stronger localization properties;
their bias is proved to be asymptotically negliglible (see Theorem \ref%
{theorembias} below and numerical evidence in Section \ref{secsimulation}.

\subsection{Statement of the main result}
Given a set of i.i.d. circular data $\left\{ X_{i},i=1,...,n\right\} ,$
distributed over $\mathbb{S}^{1}$ with density $F$, and the set of circular
Mexican needlets, $\left\{ \psi _{jq;s}\left( \theta \right) ,\theta \in 
\mathbb{S}^{1}\right\} $, whose definition and main properties will be given
below in Subsection \ref{subharmonic}, a threshold wavelet estimator $%
\widehat{F}$ for the density function is given by 
\begin{equation*}
\widehat{F}\left( \theta \right)
=\sum_{j=J_{0}}^{J_{n}}\sum_{q=1}^{Q_{j}}\zeta _{jq}\left( \tau _{n}\right) 
\widehat{\beta }_{jq;sK}\psi _{jq;sK}\left( \theta \right) \text{ , }\theta
\in \mathbb{S}^{1},
\end{equation*}%
where $\zeta _{jq}\left( \tau _{n}\right) $ denotes the threshold, $\widehat{%
\beta }_{jq;sK}$ the unbiased estimator of the wavelet coefficient
corresponding to $\psi _{jq;sK}\left( \theta \right) $, $K$ is the cut-off
frequency; further details can be found in Section \ref{secprocedure}. We choose Besov spaces, labelled by $\mathcal{B}_{m,t}^{r}$, as nonparametric regression class of functions,
 (cfr. Subsection \ref%
{subbesov}), so that Theorem \ref{theoremmain} will prove that%
\begin{equation}
\sup_{F\in \mathcal{B}_{m,t}^{r}}\mathbb{E}\left[ \left\Vert \widehat{F}%
-F\right\Vert _{L^{2}\left( \mathbb{S}^{1}\right) }^{2}\right] =O_{n}\left(
\log n\left( \frac{n}{\log n}\right) ^{-\frac{2r}{2r+1}}\right),
\label{Ogrande}
\end{equation}%
where $r$ is one of the smoothness parameters characterizing the Besov space. 
Observe that the results here obtained are consistent with the ones already existing
literature, cfr. for instance \cite{bkmpAoSb, donoho, donoho1,WASA}. We stress again
that the estimator $\widehat{F}$ is characterized by a bias due to the lack
of an exact reconstruction formula: the nearly-tightness of $\left\{ \psi
_{jq,s}\left( \theta \right) ,\theta \in \mathbb{S}^{1}\right\} $ assures
the bias to be negligible with respect the rate of convergence on the left
hand of \eqref{Ogrande}, since it is controlled by some parameters depending
on the number of observations $n$. All the details can be found in Theorem %
\ref{theorembias}. We stress again that the study of the asymptotic behaviour of the bias is one
of the most relevant results attained in this paper, because it represents
the main difference between density estimates here defined and the ones
built on standard needlets (see again \cite{bkmpAoSb}).

\subsection{Plan of the paper}
Section \ref{secnearly} introduces the circular Mexican needlets, their
main properties and a quick overview on circular Besov spaces. In Section \ref%
{secprocedure} we describe the nonparametric density estimates built on circular Mexican needlets, while Section \ref{secdensity} describes our main results: Theorem \ref{theoremmain}, concerns adaptivity of
the threshold density estimator $\widehat{F}$ and Theorem \ref%
{theorembias} exploits the upper bound for the bias of $\widehat{F}$. 
Section \ref{secsimulation} provides some numerical evidence, while in Section \ref{auxiliary} we collect
all the auxiliary results related to the two main theorems and some
ancillary results on circular Mexican needlets.

\section{Nearly-tight frames on the circle\label{secnearly}}
This section will provide details concerning the construction and properties of Mexican needlet frames over $\mathbb{S}^1$ and the definition of circular Besov spaces in terms of their approximation properties. 

\subsection{Harmonic analysis and circular Mexican needlets\label{subharmonic}}
In this subsection we will describe some results, already well-known in the
literature, related to Fourier analysis and the construction of the Mexican
needlets over the unit circle $\mathbb{S}^{1}$. More details on Fourier
analysis can be found, for instance, on the textbook \cite{steinweiss},
while Mexican needlets and, more in general, nearly-tight frames over
compact manifolds were introduced in the literature in \cite{gm0, gm1, gm2, gm3}, see also \cite{gelpes,pesenson}. Furthermore, we present also a
simplified statement of the localization property in the spatial domain for
Mexican needlets over $\mathbb{S}^{1}$, described more extensively in Lemma %
\ref{localization} (see also \cite{durastanti1, gm2}).\\
Let us denote by $L^{2}\left( \mathbb{S}^{1}\right) \equiv L^{2}\left( 
\mathbb{S}^{1},d\rho \right) $ the space of square integrable functions over
the circle with respect to the Lebesgue measure $\rho \left( d\theta \right)
=\left( 2\pi \right) ^{-1}d\theta $, on which we define the inner product as
follows: for $f,g\in L^{2}\left( \mathbb{S}^{1},d\rho \right) $%
\begin{equation*}
\left\langle f,g\right\rangle \equiv \left\langle f,g\right\rangle
_{L^{2}\left( \mathbb{S}^{1}\right) }=\int_{\mathbb{S}^{1}}f\left( \theta
\right) \overline{g\left( \theta \right) }\rho \left( d\theta \right) ,
\end{equation*}%
As well known in the literature, the set $\left\{ u_{k}\left( \theta \right)
,\theta \in \mathbb{S}^{1},k\in \mathbb{Z}\right\} $, $u_{k}\left( x\right)
=\exp \left( ik\theta \right) $, describes an orthonormal basis over $%
\mathbb{S}^{1}$, whereas the Fourier transform is given by%
\begin{equation*}
a_{k}=\left\langle f,u_{k}\right\rangle _{L^{2}\left( \mathbb{S}^{1}\right)
}=\frac{1}{2\pi }\int_{0}^{2\pi }f\left( \theta \right) \overline{%
u_{k}\left( \theta \right) }d\theta ,
\end{equation*}%
and the corresponding Fourier inversion is given by%
\begin{equation}
f\left( \theta \right) =\sum_{k\in \mathbb{Z}}a_{k}u_{k}\left( \theta
\right) \text{ , }\theta \in \mathbb{S}^{1}.
\label{fourier expansion}
\end{equation}%
Furthermore, $\left\{ u_{k}\left( \theta \right) ,\theta \in \mathbb{S}%
^{1},k\in \mathbb{Z}\right\} $ can be viewed as the eigenfunctions of the
circular Laplacian $\Delta $ corresponding to eigenvalues $-k^{2}$ (for more
details, see for instance \cite{marpecbook}). For $F\in L^{2}\left( \mathbb{S%
}^{1}\right) $, the quantity $\gamma _{k}$ is given by%
\begin{equation}
\gamma _{k}:=\left\vert a_{k}\right\vert ^{2}, \label{powerspectrum}
\end{equation}%
so that 
\begin{equation*}
\sum_{k\in \mathbb{Z}}\gamma _{k}=\sum_{k\in \mathbb{Z}}\left\vert
a_{k}\right\vert ^{2}=\left\Vert F\right\Vert _{L^{2}\left( \mathbb{S}%
^{1}\right) }^{2}.
\end{equation*}
\begin{remark}
\label{convergence}Since $\left\Vert F\right\Vert _{L^{2}\left( \mathbb{S}%
^{1}\right) }^{2}<\infty $, the sum $\sum_{k\in \mathbb{Z}}\gamma _{k}$ has
to converge, therefore 
\begin{align*}
&\lim_{\left\vert k\right\vert \rightarrow \infty }\gamma _{k} =0,
\\
&\lim_{\left\vert k\right\vert \rightarrow \infty }\left\vert
a_{k}\right\vert =0.
\end{align*}
\end{remark}
Let us now introduce the Mexican needlet system. Let the weight function $%
w_{s}:\mathbb{R}\mapsto \mathbb{R}_{+}$ be given by%
\begin{equation}
w_{s}\left( x\right) :=x^{s}\exp \left( -x\right) \text{ , }x\in \mathbb{R}%
,  \label{weight}
\end{equation}%
so that, from the Calderon formula and for $t\in \mathbb{R}_{+}$, it holds
that%
\begin{equation*}
e_{s}:=\int_{0}^{\infty }\left\vert w_{s}\left( tx\right) \right\vert ^{2}%
\frac{dx}{x}=\frac{\Gamma \left( 2s\right) }{2^{2s}},
\end{equation*}%
while (see \cite{gm2}) using the Daubechies' criterion leads us to%
\begin{equation*}
\Lambda _{B,s}m_{B}\leq \sum_{j=-\infty }^{\infty }\left\vert w_{s}\left(
tB^{-2j}\right) \right\vert ^{2}\leq \Lambda _{B,s}M_{B},
\end{equation*}%
where, for the scale parameter $B>1$, 
\begin{align*}
&\Lambda _{B,s}=e_{s}\left( 2\log B\right) ^{-1},\\
&M_{B}=\left(1+O_{B}\left( \left\vert B-1\right\vert ^{2}\log \left\vert B-1\right\vert
\right) \right),\\
& m_{B}=\left( 1-O_{B}\left( \left\vert B-1\right\vert
^{2}\log \left\vert B-1\right\vert \right) \right). 
\end{align*}
\begin{figure}[tbp]
\centering
\includegraphics[width=\textwidth]{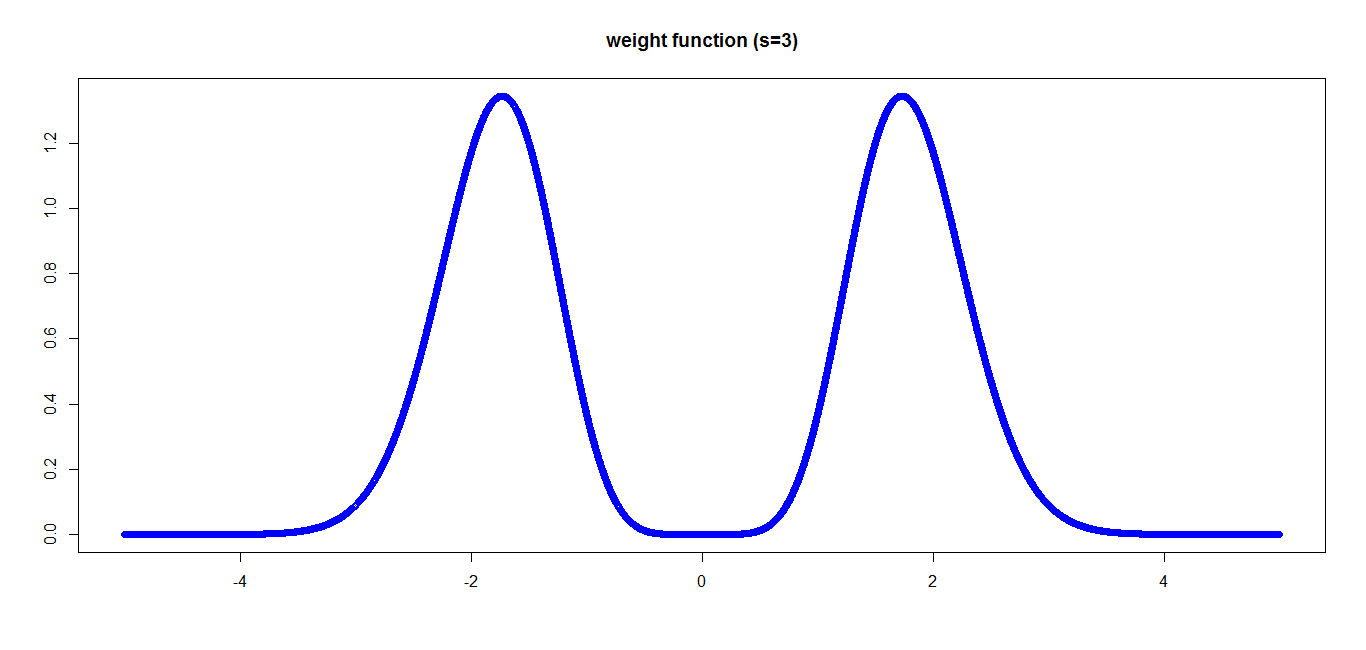}
\caption{the weight function $w_{s}\left(x^2\right)$ for $s=3$.}
\label{fig:1}
\end{figure}
\noindent For any resolution level $j\in \mathbb{Z}$, let $\left\{ E_{jq}\right\} ,$ $%
q=1,...,Q_{j}$ be a partition of $\mathbb{S}^{1}$, such that $E_{jq_{1}}\cap
E_{jq_{2}}=\varnothing $ for $q_{1}\neq q_{2}$. Any $E_{jq}$ is
characterized by the couple $\left( \lambda _{jq},x_{jq}\right) $: $\lambda
_{jq}=\rho \left( E_{jq}\right) $ describes the length of $E_{jq}$, while $%
x_{jq}\in E_{jq}$ is a point belonging to $E_{jq}$. For the sake of
simplicity, we can think to $x_{jq}$ as the midpoint of the segment of arc $%
E_{jq}$. Fixed now the shape parameter $s\in \mathbb{N}$ and the scale
parameter $B>1$, the circular Mexican needlet $\psi _{jq;s}:\mathbb{S}%
^{1}\mapsto \mathbb{C}$ is given by%
\begin{eqnarray}
\psi _{jq;s}\left( \theta \right)& := & \sqrt{\lambda _{jq}}\sum_{k=-\infty
}^{\infty }w_{s}\left( \left( B^{-j}k\right) ^{2}\right) \overline{%
u_{k}\left( x_{jq}\right) }u_{k}\left( \theta \right)  \notag \\
& = &\sqrt{\lambda _{jq}}\sum_{k=-\infty }^{\infty }w_{s}\left( \left(
B^{-j}k\right) ^{2}\right) \exp \left( ik\left( \theta -x_{jq}\right)
\right) \text{, }\theta \in \mathbb{S}^{1}.  \label{needletdef}
\end{eqnarray}
\begin{figure}[tbp]
\centering
\includegraphics[width=\textwidth]{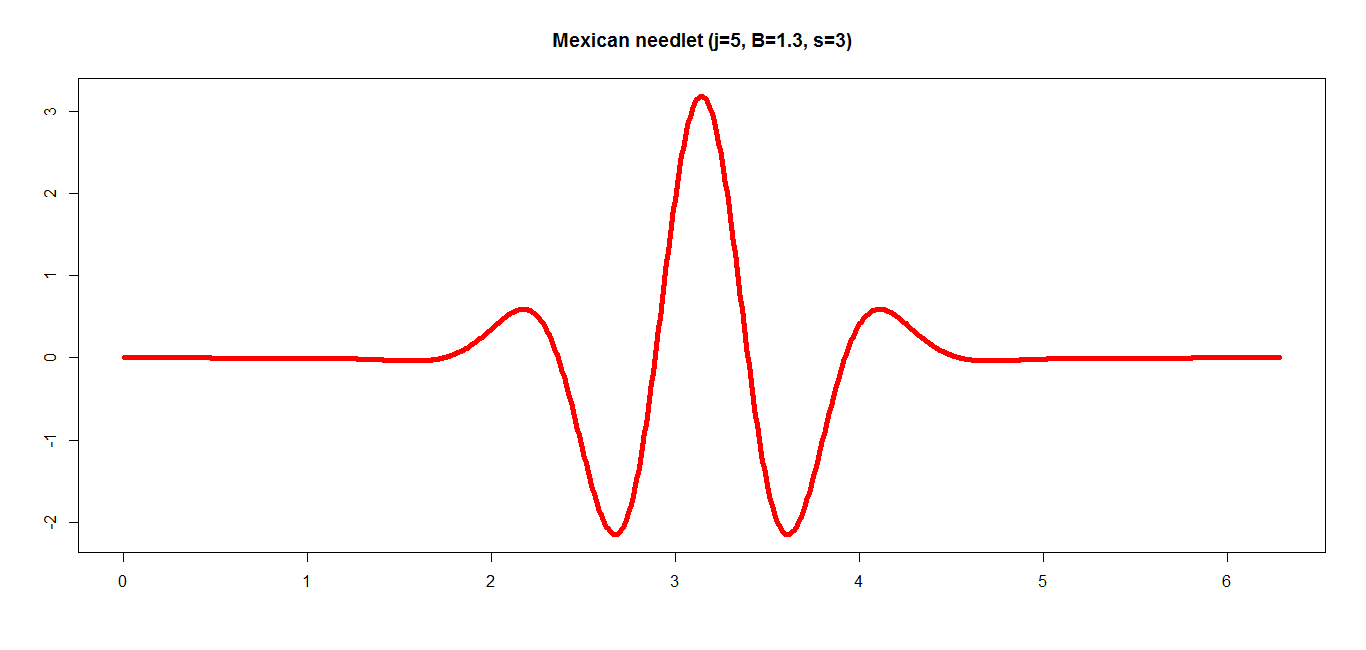}
\caption{The Mexican Needlet with $s=3, B=1.3, j=5$ centered on the point $%
x_{jq}=\protect\pi$.}
\label{fig:2}
\end{figure}
\noindent For any $F\in $ $L^{2}\left( \mathbb{S}^{1}\right) $, the needlet
coefficient $\beta _{jq;s}\in \mathbb{C}$ corresponding to $\psi _{jq;s}$ is
given by%
\begin{equation}
\beta _{jq;s}:=\left\langle F,\psi _{jq;s}\right\rangle _{L^{2}\left( 
\mathbb{S}^{1}\right) }.  \label{needletcoeff}
\end{equation}
The next result, here properly fitted for $\mathbb{S}^{1}$, was originally
proposed as Theorem 1.1 in \cite{gm2}: it proves that the Mexican needlet
framework describes a nearly-tight frame on the manifold. We recall that a
set of functions $\left\{ e_{i},i\geq 1\right\} $ defined over a manifold $M$
is a frame if there exist $c_{1},c_{2}>0$ so that, for any $F\in L^{2}\left(
M\right)$, 
\begin{equation*}
c_{1}\left\Vert F\right\Vert _{L^{2}\left( M\right) }^{2}\leq
\sum_{i}\left\vert \left\langle F,e_{i}\right\rangle _{L^{2}\left( M\right)
}\right\vert ^{2}\leq c_{2}\left\Vert F\right\Vert _{L^{2}\left( M\right)
}^{2}.
\end{equation*}%
A frame is said to be tight if $c_{1}=c_{2}$. An example of a tight frame
over the $d$-dimensional sphere $\mathbb{S}^{d}$ is given by the standard
needlets, introduced in the literature in \cite{npw1, npw2}. A frame is
nearly-tight if $c_{2}/c_{1}\simeq 1+\varepsilon $, where $\varepsilon $ is
close to $0$, cfr. \cite{gm1}.
\begin{theorem}
\label{nearlytight}(\textbf{Nearly-tightness of the Mexican needlets frame -
Th. 1.1 in \cite{gm2})} Fixing $B>1$ and $c_{0},\delta _{0}>0$ sufficiently
small, there exists a constant $C_{0}$ as follows:
\begin{itemize}
\item for $0<\eta <1$, suppose that for each $j\in \mathbb{Z}$, there exists
a set of measurable sets $\left\{ E_{jq},q=1,...,Q_{j}\right\} $, with $%
\lambda _{jq}=\mu \left( E_{jq}\right) $, where:
\begin{itemize}
\item $\lambda _{jq}\leq \eta B^{-j}$;
\item for each $j$ with $\eta B^{-j}<\delta _{0}$, $\lambda _{jq}\geq
c_{0}\left( \eta B^{-j}\right) $ for $q=1,...,Q_{j}$;
\end{itemize}
\item it holds that 
\begin{equation*}
\left( \Lambda _{B,s}m_{B}\!-\! C_{0}\eta \right) \left\Vert F\right\Vert
_{L^{2}\left( \mathbb{S}^{1}\right) }^{2}\!\leq \! \sum_{j=-\infty }^{\infty
}\sum_{q=1}^{Q_{j}}\left\vert \beta _{jq;s}\right\vert ^{2} \!\leq\! \left(
\Lambda _{B,s}M_{B}\!+\!C_{0}\eta \right) \left\Vert F\right\Vert _{L^{2}\left( 
\mathbb{S}^{1}\right) }^{2}.
\end{equation*}%
If $\left( \Lambda _{B,s}m_{B}-C_{0}\eta \right) >0$, then $\left\{ \psi
_{jq;s}\right\} $ is a nearly tight frame, since 
\begin{equation*}
\frac{\left( \Lambda _{B,s}M_{B}+C_{0}\eta \right) }{\left( \Lambda
_{B,s}m_{B}-C_{0}\eta \right) }\sim \frac{M_{B}}{m_{B}}=1+O_{B}\left(
\left\vert B-1\right\vert ^{2}\log \left\vert B-1\right\vert \right) .
\end{equation*}
\end{itemize}
\end{theorem}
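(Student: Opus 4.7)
The plan is to expand each needlet coefficient via the Fourier series of $F$ and then reduce the whole two-sided estimate to a single quadrature problem on the circle. Substituting $F=\sum_k a_k u_k$ and the definition \eqref{needletdef} into \eqref{needletcoeff} and invoking orthonormality of $\{u_k\}$ gives
\begin{equation*}
\beta_{jq;s} = \sqrt{\lambda_{jq}}\sum_{k\in\mathbb{Z}} w_s(k^2 B^{-2j})\, a_k\, u_k(x_{jq}),
\end{equation*}
and expanding $|\beta_{jq;s}|^2$ and summing in $q$ yields
\begin{equation*}
\sum_{q=1}^{Q_j}|\beta_{jq;s}|^2 \;=\; \sum_{k,k'\in\mathbb{Z}} w_s(k^2 B^{-2j})\, w_s(k'^2 B^{-2j})\, a_k\, \overline{a_{k'}}\, S_j(k-k'),
\end{equation*}
where $S_j(m) := \sum_{q}\lambda_{jq}\, e^{i m x_{jq}}$ is a Riemann-type sum approximating $\int_{\mathbb{S}^1} e^{im\theta}\,\rho(d\theta)=\delta_{m,0}$. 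The rest of the proof splits this expression into a diagonal piece ($k=k'$) and an off-diagonal remainder.

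For the diagonal piece, $S_j(0)=\sum_q \lambda_{jq} = 1$ because $\{E_{jq}\}$ partitions $\mathbb{S}^1$ and $\rho$ is the normalised Lebesgue measure, so summing in $j$ produces $\sum_{j,k}|w_s(k^2B^{-2j})|^2|a_k|^2$. For each fixed $k\neq 0$ the Daubechies criterion recalled in Section \ref{subharmonic}, applied with $t=k^2$, gives uniformly
\begin{equation*}
\Lambda_{B,s}\, m_B \;\leq\; \sum_{j\in\mathbb{Z}}|w_s(k^2 B^{-2j})|^2 \;\leq\; \Lambda_{B,s}\, M_B,
\end{equation*}
while the $k=0$ term vanishes because $w_s(0)=0$ for $s\geq 1$. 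Multiplying by $|a_k|^2$ and summing in $k$ via the Parseval identity yields the desired main term $\Lambda_{B,s}\, m_B \|F\|_{L^2(\mathbb{S}^1)}^2 \le \mathrm{(diagonal)} \le \Lambda_{B,s}\, M_B \|F\|_{L^2(\mathbb{S}^1)}^2$.

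The principal obstacle is controlling the off-diagonal remainder by $C_0\eta\|F\|_{L^2(\mathbb{S}^1)}^2$. For $m\neq 0$ the function $\theta\mapsto e^{im\theta}$ is Lipschitz with constant $|m|$, so a mean-value estimate combined with the mesh hypothesis $\lambda_{jq}\leq \eta B^{-j}$ gives the pointwise cubature bound $|S_j(m)| \leq C|m|\eta B^{-j}$. The linear growth in $|m|$ must then be absorbed against the rapid decay of $w_s(x)=x^s e^{-x}$: the multiplier $w_s(k^2B^{-2j})$ is essentially supported on $|k|\lesssim B^j$, so the effective $|m|=|k-k'|$ is of order $B^j$ and the apparent factor $|m|\eta B^{-j}$ collapses to a uniform $O(\eta)$. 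I would make this precise by a Schur-type test or Cauchy--Schwarz argument in $(k,k')$ at each level $j$, followed by summation over $j$ (the hypothesis $\lambda_{jq}\geq c_0\eta B^{-j}$ in the regime $\eta B^{-j}<\delta_0$ is used to bound $Q_j$ and secure geometric convergence across the coarse scales). Combining this with the diagonal two-sided bound and rewriting the ratio via $M_B/m_B = 1+O_B(|B-1|^2\log|B-1|)$ yields the nearly-tight frame conclusion, provided $\Lambda_{B,s}m_B-C_0\eta>0$.
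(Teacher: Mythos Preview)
The paper does not prove this theorem at all: it is quoted verbatim as Theorem 1.1 of \cite{gm2} (Geller--Mayeli) and used as a black box throughout. So there is no ``paper's own proof'' to compare your attempt against; any comparison would have to be with the original argument in \cite{gm2}, which proceeds at the level of kernel operators on a general compact manifold rather than via an explicit Fourier expansion on $\mathbb{S}^1$.

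That said, your Fourier-based reduction is a legitimate and natural specialisation to the circle, and the diagonal part is handled correctly: $S_j(0)=\sum_q\lambda_{jq}=1$ by the partition property, and the Daubechies bound then gives the main two-sided term. The off-diagonal control, however, is only sketched. Your quadrature estimate $|S_j(m)|\le C|m|\eta B^{-j}$ is fine, but the passage ``the effective $|m|$ is of order $B^j$ so the factor collapses to $O(\eta)$'' hides the real work: after Cauchy--Schwarz in $(k,k')$ you must still sum over $j\in\mathbb{Z}$, and the bilinear form you obtain is not obviously summable without a more careful use of the Gaussian decay of $w_s$ to produce an honest geometric series in $j$. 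You also do not explain how the lower-mesh hypothesis $\lambda_{jq}\ge c_0\eta B^{-j}$ enters; in the Geller--Mayeli argument this is what prevents $Q_j$ from being uncontrollably large and is needed to bound the coarse-scale tail. As written, the off-diagonal step is a plausible outline rather than a proof.
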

Mexican needlets can be thought as an alternative approach to the
standard needlets, proposed in \cite{npw1,npw2}, see also \cite{bkmpAoSb,marpecbook}, in views of their stronger localization
property in the real domain. Standard needlets feature a
quasi-exponential localization property in the spatial range, while the
weight function $w_{s}$ leads to a full-exponential localization in the real
space as proved below in Lemma \ref{localization} (cfr. \cite{durastanti1,gm2}). As far as the frequency domain is concerned, while spherical
needlets lie on compact support (see again \cite{npw1,npw2}), each
Mexican needlet has to take in account the whole frequency range. This issue
is partially compensated by the structure itself of the function $w_{s}$,
exponentially localized around a dominant term in the frequency domain and,
therefore, consistently different from zero only on limited set of
frequencies. For our purposes and in order to respect the conditions in
Theorem \ref{nearlytight}, we impose the following
\begin{condition}
\label{areaandcardinality}. Let $\psi _{jk;s}\left( \theta \right) $ and $%
\beta _{jk;s}$ be given respectively by \eqref{needletdef} and \eqref{needletcoeff}. We have that, for $j>0$%
\begin{equation*}
Q_{j}\approx \eta ^{-1}B^{j}\text{ , }\lambda _{jq}\approx \eta B^{-j},
\end{equation*}%
so that 
\begin{equation}
\psi _{jq;s}\left( \theta \right) \approx \eta ^{\frac{1}{2}}B^{-\frac{j}{2}%
}\sum_{k=-\infty }^{\infty }w_{s}\left( \left( B^{-j}k\right) ^{2}\right)
\exp \left( ik\left( \theta -x_{jq}\right) \right) \text{ , }\theta \in 
\mathbb{S}^{1}.  \label{notation}
\end{equation}%
Furthermore, we choose $J_{0}<-\log _{B}\sqrt{s}$ and fix $\delta _{0}$ such
that $\delta _{0}\leq \eta B^{-J_{0}}$. Hence, we have, for $j<J_{0}$, 
\begin{equation}
Q_{j}=1\text{, }\lambda _{jq}=2\pi .  \label{jnegativo}
\end{equation}
\end{condition}
Mexican needlets are characterized by the following localization property,
proven in Lemma \ref{localization}:%
\begin{equation*}
\left\vert \psi _{jk;s}\left( \theta \right) \right\vert \leq \! \sqrt{\lambda
_{jk}}c_{s}B^{j}\exp \!\left( \!-\left( \!\frac{B^{j}\left( \theta -x_{jk}\right) 
}{2}\right) ^{2}\right) \!\left( 1+\!\left( \frac{B^{j}\left( \theta
-x_{jk}\right) }{2}\right) ^{2s}\right).
\end{equation*}
From the localization property, it follows a bound rule on the norms: there
exist $\widetilde{c_{p}},\widetilde{C_{p}}>0$ such that 
\begin{equation}
\widetilde{c_{p}}B^{j\left( \frac{p}{2}-1\right) }\eta ^{\frac{p}{2}}\leq
\left\Vert \psi _{jq;s}\right\Vert _{L^{p}\left( \mathbb{S}^{1}\right)
}^{p}\leq \widetilde{C_{p}}\eta ^{\frac{p}{2}}B^{j\left( \frac{p}{2}%
-1\right) }\text{ .}  \label{normbound}
\end{equation}%
The proof, totally analogous to the case of standard needlets (see \cite%
{npw2}), is here omitted.
\begin{remark}
\label{reasons}The choice of \eqref{jnegativo} is justified as follows.
First of all, observe that, for any $j<J_{0}$, $\lambda _{jq}\ $still
satisfies Theorem \ref{nearlytight}. Furthermore, when $j$ is negative, the $%
B^{-j}$ grows to infinity, hence there exists some $J^{\prime }<0$ such that 
$\delta _{0}\leq \eta B^{-J^{\prime }}$. It implies that the $\lambda _{jq}$
has to be smaller than a quantity bigger$\ $than $4\pi =\rho \left( \mathbb{S%
}^{1}\right) $, corresponding to the case $E_{jq}\equiv \mathbb{S}^{1}$,
which leads to $Q_{j}=1$, so that we have that $Q_{j}\lambda _{jk}\approx 1$%
. As far as the choice of $J_{0}$ is concerned, if $J_{0}<-\log _{B}\sqrt{s}$%
, it means that, for any $k\,$, $\left\vert kB^{-J_{0}}\right\vert >s$, and
therefore $w_{s}\left( \left( kB^{-J_{0}}\right) ^{2}\right) <$ $w_{s}\left(
s\right) =\max_{r\in \mathbb{R}}w_{s}\left( r\right) $. As consequence,
taking into account Lemma \ref{ecchecaspita}, we have that for any $k$, $\chi _{s,B,J_{0}}\left( k^{2}\right) <<2^{-2s}\Gamma \left( 2s\right) =e_{s}$.
\end{remark}
\begin{remark}
While in \cite{gm0,gm1,gm2,gm3} the Mexican needlets are defined as $\psi
_{j^{\prime }q}\left( \theta \right) \equiv \psi _{-jq}\left( \theta \right) 
$, $\theta \in \mathbb{S}^{1}$. We use this notation to uniform this work to
the already existing literature on the field of statistics based on
needlet-like framework.
\end{remark}

\subsection{Besov spaces on the circle\label{subbesov}}
In this subsection, we will recall some of the results proposed in \cite{gm3}
(see also \cite{WASA, npw2}) on Besov spaces, in terms of their approximation properties. 
More in details, let $\Pi _{r}$
be the space of polynomials of degree $r$: we start by looking for the
infimum of the $L^{p}\left( \mathbb{S}^{1}\right) $-distance between a
function $f:\mathbb{S}^{1}\mapsto \mathbb{R}$ and the space $\Pi _{r}$:%
\begin{equation*}
G_{r}\left( f,P\right) =\inf_{P\in \Pi _{r}}\left\Vert f-P\right\Vert
_{L^{p}\left( \mathbb{S}^{1}\right) }.
\end{equation*}%
Following, for instance, \cite{bkmpAoSb, dgm, gm3, npw2}, let $F\in \mathcal{%
B}_{m,t}^{r}$, if and only if both the following conditions hold:%
\begin{equation*}
\left( i\right) \text{ }F\in L^{m}\left( \mathbb{S}^{1}\right) \text{ , }%
\left( ii\right) \left( \sum_{u}\left( u^{s}G_{u}\left( f,P\right) \right) ^{%
\frac{r}{u}}\right) ^{\frac{1}{r}},
\end{equation*}
or, equivalently, 
\begin{equation*}
\left( i\right) \text{ }F\in L^{m}\left( \mathbb{S}^{1}\right) \text{ , }%
\left( ii\right) \left( \sum_{j}\left( B^{-jr}G_{B^{j}}\left( f,P\right)
\right) ^{q}\right) ^{\frac{1}{q}}.
\end{equation*}%
As shown in \cite{gm3}, see also \cite{bkmpAoSb}, it holds that, for $1\leq
m\leq \infty $, $r>0$, $0\leq t\leq \infty $, $f\in $ $\mathcal{B}_{m,t}^{r}$
if and only if%
\begin{equation*}
\left( \sum_{q=1}^{Q_{j}}\left\vert \beta _{jq;s}\right\vert ^{m}\left\Vert
\psi _{jk;s}\right\Vert _{L^{m}\left( \mathbb{S}^{1}\right) }^{m}\right) ^{%
\frac{1}{m}}<B^{-jr}\delta _{j},\text{ }\delta _{j}\in \ell _{r}.
\end{equation*}%
In what follows, we will make extensive use of this inequality with $m=2$:%
\begin{equation}
\left( \sum_{q=1}^{Q_{j}}\left\vert \beta _{jq;s}\right\vert ^{2}\left\Vert
\psi _{jk;s}\right\Vert _{L^{2}\left( \mathbb{S}^{1}\right) }^{2}\right) ^{%
\frac{1}{2}}\leq \left( \widetilde{C_{2}}\eta \sum_{q=1}^{Q_{j}}\left\vert
\beta _{jq;s}\right\vert ^{2}\right) ^{\frac{1}{2}}<B^{-jr}\delta _{j},\text{
}\delta _{j}\in \ell _{r}. \label{besovineq}
\end{equation}
Further details on Besov spaces can be found in \cite{npw2} and in the
textbook \cite{WASA}.

\section{The density estimation procedure\label{secprocedure}}
In this section, we will introduce a thresholding density function estimator
on the circle based on the Mexican needlet coefficients. As already mentioned, thresholding
techniques were introduced in the literature by D. Donoho and I. Johnstone
in \cite{donoho}, to be later successfully applied in several research topics: for
an exhaustive overview and details we suggest the textbooks \cite{WASA} and \cite{tsyb}.
Consider a set of random directional observations $\left\{ X_{i}\in \mathbb{S%
}^{1}:i=1,...,n\right\} $ with common distribution $v\left( \theta \right)
=F\left( \theta \right) d\theta $ and let us introduce the threshold function $\zeta
_{jq}\left( \tau _{n}\right) :=\mathds{1}_{\left\{ \left\vert \beta
_{jq;s}\right\vert \geq \kappa \tau _{n}\right\} }$, where $\kappa $ is a
real-valued positive constant to be chosen to set the size of the threshold
(cfr. \cite{bkmpAoSb}). The coefficient estimator is given by 
\begin{equation*}
\widehat{\beta }_{jq;s}:=\frac{1}{n}\sum_{i=1}^{n}\overline{\psi }%
_{jq;sK}\left( X_{i}\right),
\end{equation*}%
which is unbiased, i. e. 
\begin{equation*}
\mathbb{E}\left[ \widehat{\beta }_{jqK;s}\right] =\int_{\mathbb{S}^{1}}%
\overline{\psi }_{jq;sK}F\left( \theta \right) d\theta =\beta _{jqK;s}.
\end{equation*}%
Consequently, the thresholding density estimator is given by 
\begin{equation}
\widehat{F}\left( \theta \right)
=\sum_{j=J_{0}}^{J_{n}}\sum_{q=1}^{Q_{j}}\zeta _{jq}\left( \tau _{n}\right) 
\widehat{\beta }_{jqK;s}\psi _{jq;sK}\left( \theta \right) \text{ , }\theta
\in \mathbb{S}^{1}, \label{estimatordef}
\end{equation}%
where $J_{n}$ and $K_{n}$ represent respectively the truncation resolution level and
the cut-off frequency. The truncation level is chosen so that $B^{J_{n}}=\sqrt{\frac{n}{\log n}}$, as usual in the literature (see for instance \cite%
{bkmpAoSb, dgm}), while the cut-off frequency is fixed so that $K_{n}=%
\sqrt{\frac{n}{\log n}}$, . The other tuning parameters of the Mexican needlet
estimator to be considered are:
\begin{itemize}
\item the threshold constant $\kappa $, whose evaluation is given in the
Section 6 of \cite{bkmpAoSb};
\item the scaling factor $\tau _{n}$, depending on the sample size,
chosen, as usual in the literature, as $\tau _{n}=\left( \log n/n\right)
^{1/2} $;
\item the pixel-parameter $\eta _{n}=\eta $, chosen so that $\eta
_{n}=O_n\left( n^{-\frac{2}{3}}\right)$.
\end{itemize}
We will present our main result concerning Mexican thresholding density
estimation in the next Theorem. For the embeddings featured by the Besov
spaces, as in \cite{bkmpAoSb}, the condition $r>\frac{1}{m}$ implies that $%
F\in \mathcal{B}_{m,t}^{r}\subset \mathcal{B}_{\infty ,t}^{r-\frac{1}{m}}$,
so that $F$ is continuous.
\begin{theorem}
\label{theoremmain}For $1\leq m=t<2$, $r>\frac{1}{m}$, there exists some
constant $C_{0}=C_{0}\left( m,r\right) $ such that%
\begin{equation}
\sup_{F\in \mathcal{B}_{m,t}^{r}}\mathbb{E}\left[ \left\Vert \widehat{F}%
-F\right\Vert _{L^{2}\left( \mathbb{S}^{1}\right) }^{2}\right] \leq
C_{0}\log n\left( \frac{n}{\log n}\right) ^{-\frac{2r}{2r+1}}\text{ .}
\label{mainstatement}
\end{equation}
\end{theorem}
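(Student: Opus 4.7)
The plan is to follow the standard four-terms thresholding analysis of Donoho--Johnstone--Kerkyacharian--Picard, as adapted to the needlet setting in \cite{bkmpAoSb}, with extra care for the fact that $\{\psi_{jq;s}\}$ is only a nearly-tight frame and that the needlets used in the estimator are frequency-truncated at $K_n$. First I introduce the noiseless truncated expansion
\begin{equation*}
F_{J_n}(\theta) := \sum_{j=J_0}^{J_n} \sum_{q=1}^{Q_j} \beta_{jq;sK}\,\psi_{jq;sK}(\theta)
\end{equation*}
and decompose
\begin{equation*}
\mathbb{E}\|\widehat F - F\|_{L^2(\mathbb{S}^1)}^2 \leq 2\,\mathbb{E}\|\widehat F - F_{J_n}\|_{L^2}^2 + 2\,\|F_{J_n} - F\|_{L^2}^2 .
\end{equation*}
The deterministic summand collects the truncation error at resolution level $J_n$ and at frequency $K_n$ together with the bias induced by the absence of an exact reconstruction formula; Theorem~\ref{theorembias} combined with the Besov characterization \eqref{besovineq}, at the choice $B^{J_n}=\sqrt{n/\log n}$, gives a contribution of order $(n/\log n)^{-2r/(2r+1)}$, dominated by the target rate.

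For the stochastic summand, the upper frame inequality of Theorem~\ref{nearlytight}, used as a Bessel-type bound on the dual side, together with \eqref{normbound}, reduces the risk to a sum of the form
\begin{equation*}
\sum_{j=J_0}^{J_n}\sum_{q=1}^{Q_j}\mathbb{E}\!\left[\left|\zeta_{jq}(\tau_n)\widehat\beta_{jq;sK} - \beta_{jq;sK}\right|^2\right]\|\psi_{jq;sK}\|_{L^2}^2 .
\end{equation*}
Then I perform the classical four-terms split according to whether $|\widehat\beta_{jq;sK}|$ and $|\beta_{jq;sK}|$ lie above or below fractions of $\kappa\tau_n$. The two \emph{off-diagonal} pieces, where the empirical and true coefficients sit on opposite sides of the threshold, are treated by Bernstein's inequality applied to the i.i.d.\ sum $\widehat\beta_{jq;sK}-\beta_{jq;sK}=n^{-1}\sum_i(\overline{\psi}_{jq;sK}(X_i)-\mathbb{E}\overline{\psi}_{jq;sK}(X_1))$, with variance and uniform bounds read off from \eqref{normbound}; taking $\kappa$ large enough makes the probability of a wrong thresholding decision decay as a polynomial in $n^{-1}$ that absorbs the $B^j$ growth in the number of coefficients. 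The two \emph{diagonal} pieces (large--large and small--small) are handled deterministically via \eqref{besovineq}.

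The rate comes from the standard optimization: split the sum over $j$ at the critical level $J^{\ast}$ defined by $B^{J^{\ast}}\approx(n/\log n)^{1/(2r+1)}$. For $J_0\leq j\leq J^{\ast}$ the variance-type terms dominate, summed using $Q_j\lesssim B^j/\eta_n$, $\|\psi_{jq;sK}\|_{L^2}^2\lesssim\eta_n$ and $\mathbb{E}|\widehat\beta-\beta|^2\lesssim B^j/n$; for $J^{\ast}<j\leq J_n$ the bias-type terms dominate, controlled through \eqref{besovineq} combined with the sparse-regime Besov embedding $\mathcal{B}^{r}_{m,t}\hookrightarrow\mathcal{B}^{r-1/m+1/2}_{2,t}$ valid for $m<2$. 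Both contributions yield the claimed $\log n\cdot(n/\log n)^{-2r/(2r+1)}$, the extra logarithm coming from $\tau_n=\sqrt{\log n/n}$.

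The hardest part will be quantifying the perturbation introduced by replacing the full Mexican needlet $\psi_{jq;s}$ with its frequency-truncated version $\psi_{jq;sK}$ in both the estimator and the Bessel bound, while simultaneously absorbing the nearly-tight frame bias coming from $\Lambda_{B,s}(M_B-m_B)$ in Theorem~\ref{nearlytight}. Unlike the tight-frame setting of \cite{bkmpAoSb}, the ``true'' coefficient $\beta_{jq;sK}$ is itself a biased surrogate for $\beta_{jq;s}$, and the Bernstein variance estimate must remain uniform in $(j,q)$ up to $j\leq J_n$; controlling this uniformly relies on the exponential decay of $w_s$ and on the coordinated choices $K_n=B^{J_n}=\sqrt{n/\log n}$ and $\eta_n=O_n(n^{-2/3})$, whose combined effect must be tracked throughout the four-terms bookkeeping.
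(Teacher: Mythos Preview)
Your outline matches the paper's proof closely: the paper decomposes $\mathbb{E}\|\widehat F-F\|_{L^2}^2$ into $E_1+E_2+E_3$ with $E_1=\mathbb{E}\|\widehat F-S[F]_{s,K_n,J_n}\|_{L^2}^2$, $E_2=R_{s,K_n,J_n}^2$ (handled by Theorem~\ref{theorembias}), and $E_3=\|S[F]_s-F\|_{L^2}^2\leq C\eta_n$ (the nearly-tight frame defect); your $F_{J_n}$ is exactly $S[F]_{s,K_n,J_n}$ and your single deterministic summand is $E_2+E_3$. For $E_1$ the paper performs the same four-terms split \eqref{E11}--\eqref{E14}, treats the cross-terms via Bernstein (Lemma~\ref{probab}) and the diagonal ones via \eqref{besovineq}, splitting at $B^{J_{1,n}}=(n/\log n)^{1/(2r+1)}$, just as you propose. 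The extra $\log n$ in the paper actually enters through a crude Cauchy--Schwarz over $j$ (the factor $(J_n+1)$), not through $\tau_n$, but this is cosmetic.

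There is, however, one slip that would make your large--large bound fail: you write $\mathbb{E}|\widehat\beta_{jq;sK}-\beta_{jq;sK}|^2\lesssim B^j/n$, but in fact
\[
\mathrm{Var}(\widehat\beta_{jq;sK})\leq n^{-1}\|F\|_{\infty}\|\psi_{jq;sK}\|_{L^2}^2\lesssim \eta_n\,n^{-1}\leq c_E\,n^{-1},
\]
uniformly in $j$; this is exactly \eqref{Eineq} in Lemma~\ref{probab}, using \eqref{normbound} with $p=2$. With your $B^j/n$ the variance-dominated sum over $j\leq J^\ast$ would give $\sum_{j\leq J^\ast}Q_j\cdot\eta_n\cdot B^j/n\asymp B^{2J^\ast}/n=(n/\log n)^{2/(2r+1)}/n$, which is \emph{not} $O\big((n/\log n)^{-2r/(2r+1)}\big)$. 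With the correct $O(n^{-1})$ bound the same sum is $B^{J^\ast}/n$, which does yield the target rate. Once this is fixed your argument coincides with the paper's.
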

\begin{remark}
To attain optimality, it should be necessary to show also that 
\begin{eqnarray*}
\sup_{F\in \mathcal{B}_{m,t}^{r}}\mathbb{E}\left[ \left\Vert \widehat{F}-F\right\Vert _{L^{2}\left( \mathbb{S}^{1}\right) }^{2}\right] \geq C_{*} \left( \frac{n}{\log n}\right) ^{-\frac{2r}{2r+1}}.
\end{eqnarray*}
This lower bound is entirely analogous to the standard needlet case in \cite{bkmpAoSb}, Theorem 11, and therefore its proof is here omitted.
\end{remark}

\section{Proof of Theorem \protect\ref{theoremmain}\label{secdensity}}
In this section we will provide a proof for Theorem \ref{theoremmain} based
on the main guidelines described by D. Donoho and I. Johnstone in \cite%
{donoho}, cfr. also \cite{bkmpAoSb} and the textbooks \cite{WASA,tsyb}. The procedure illustrated by \cite{bkmpAoSb, donoho, donoho1} fits perfectly for tight
wavelet systems, which feature an exact reconstruction formula. As already
discussed in Subsection \ref{subharmonic}, Mexican needlet are not
characterized by tightness, hence the bias term appearing in the study of (%
\ref{mainstatement}) will also take into account addends due the (deterministic) error raising when we approximate a function with its wavelet expansion. The decay of these terms will depend on the choice
of the pixel-parameter $\eta _{n},$ on one hand, and of $J_{n}$ and $K_{n}$ on the other hand.
We will start by developing an upper bound for the bias term, which represents the
main difference between the estimation procedure here discussed and the one based on
standard needlet frames.

\subsection{The bias: the construction and the upper bound}
We recall from \cite{gm2} the so-called summation operator $S$, leading to
the \textit{summation formula}. The summation formula can be viewed as the
equivalent in the Mexican needlet framework of the reconstruction formula in
the standard needlet case (see for instance \cite{npw1, marpecbook}): for
any $F\in L^{2}\left( \mathbb{S}^{1}\right) $, let the \textit{summation
operator} $S\left[ F\right] _{s}$ be given by%
\begin{equation}
S\left[ F\right] _{s}\left( \theta \right) :=\sum_{j=J_{0}}^{\infty
}\sum_{q=1}^{Q_{j}}\beta _{jq;s}\psi _{jq;s}\left( \theta \right) \text{ , }%
\theta \in \mathbb{S}^{1}\text{.}  \label{summationoperator}
\end{equation}%
The goal of this subsection is also to estimate which terms in the sum above
are so small that they can be neglected. We will fix a cut-off frequency $K$%
, to compensate the lack of a compact support in the harmonic domain typical
of standard needlets (see \cite{npw1}), to define the truncated Mexican
needlet,\ and a truncation resolution level $J$. Theorem \ref{theorembias} will
exploit an upper bound, depending on $s$, $J$, $K$ and $\eta $, between \eqref{summationoperator} and the truncated summation operator defined below. Observe that these results are general and not related to the specificity of the estimation problem: in this sense, when the label $n$ in $K$ and $J$ is omitted, we intend that the claimed result holds in general.\\
First of all, given $K\in \mathbb{N}$, the \textit{truncated Mexican needlet} $\psi _{jq;sK}$ is given by
\begin{equation*}
\psi _{jq;sK}\left( \theta \right) :=\sqrt{\lambda _{jq}}\sum_{\left\vert
k\right\vert \leq K}w_{s}\left( \left( kB^{-j}\right) ^{2}\right) \overline{%
u_{k}\left( \xi _{jq}\right) }u_{k}\left( \theta \right) \text{ , }\theta
\in \mathbb{S}^{1}\text{, }\xi _{jq}\in E_{jq}\text{ ,}
\end{equation*}%
and the corresponding \textit{truncated needlet coefficient} $\beta _{jq;sK}$
is defined%
\begin{equation*}
\beta _{jq;sK}:=\left\langle F,\psi _{jq;sK}\right\rangle _{L^{2}\left( 
\mathbb{S}^{1}\right) }\text{ .}
\end{equation*}%
Loosely speaking, fixed $K$, $\psi _{jk;sK}\left( \cdot \right) $ is the
Mexican needlet where all the elements out of the support $\left[ -K,K\right]
$ are not taken into account. The \textit{truncated summation operator} $S%
\left[ F\right] _{s,K,J}$ is therefore given by%
\begin{equation}
S\left[ F\right] _{s,K,J}\left( \theta \right)
:=\sum_{j=J_{0}}^{J}\sum_{q=1}^{Q_{j}}\beta _{jq;sK}\psi _{jq;sK}\left(
\theta \right) \text{ , }\theta \in \mathbb{S}^{1}\text{.}  \label{cutted}
\end{equation}
\begin{remark}
Following Remark \ref{reasons}, we will truncate in \eqref{cutted} all the
negative resolution levels $j<J_{0}$.
\end{remark}
Let the bias $R_{s,K,J,\eta }$ be given by 
\begin{equation}
R_{s,K,J,\eta }:=\left\Vert S\left[ F\right] _{s}-S\left[ F\right]
_{s,K,J}\right\Vert _{L^{2}\left( \mathbb{S}^{1}\right) }\text{ ;}
\label{bias}
\end{equation}
An upper bound for $R_{s,K,J}$ is explicitly provided in the next Theorem.
\begin{theorem}
\label{theorembias}Let $R_{s,K,J}$ be given by \eqref{bias}. Then, there
exist $C_{1},C_{2},C_{3}>0$ such that%
\begin{eqnarray*}
R_{s,K,J} &\leq &C_{1}B^{-rJ}+C_{2}J^{\frac{1}{2}}K^{2s-\frac{1}{2}}\exp
\left( -K^{2}\right) B^{-\left( r+2s-\frac{1}{2}\right) J}\text{ } \\
&&+C_{3}B^{\left( 1-2s\right) J}J^{\frac{1}{2}}K^{s-\frac{1}{4}%
}e^{-2K^{2}}\left( \sum_{\left\vert k\right\vert >K}\gamma _{k}\right) ^{%
\frac{1}{2}}
\end{eqnarray*}
\end{theorem}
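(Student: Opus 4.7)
The plan is to split the difference $S[F]_s - S[F]_{s,K,J}$ telescopically as $A + B + C$, where
\begin{align*}
A &:= \sum_{j=J+1}^{\infty} \sum_{q=1}^{Q_j} \beta_{jq;s}\psi_{jq;s}, \\
B &:= \sum_{j=J_0}^{J} \sum_{q=1}^{Q_j} \beta_{jq;s}\bigl(\psi_{jq;s} - \psi_{jq;sK}\bigr), \\
C &:= \sum_{j=J_0}^{J} \sum_{q=1}^{Q_j} \bigl(\beta_{jq;s} - \beta_{jq;sK}\bigr)\psi_{jq;sK}.
\end{align*}
By the triangle inequality $R_{s,K,J} \leq \|A\|_{L^2} + \|B\|_{L^2} + \|C\|_{L^2}$, and my objective is to match these three norms with the three successive summands in the claimed bound.

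For the resolution tail $A$, near-orthogonality of Mexican needlets inside each level (a consequence of Theorem \ref{nearlytight}) lets me write $\|\sum_q \beta_{jq;s}\psi_{jq;s}\|_{L^2}^2 \lesssim \sum_q |\beta_{jq;s}|^2 \|\psi_{jq;s}\|_{L^2}^2$, after which the Besov characterization \eqref{besovineq} bounds the right-hand side by $C\, B^{-2jr}\delta_j^2$ with $\delta_j \in \ell^t$. A H\"older summation over $j > J$ then produces $C_1 B^{-rJ}$. For $B$, I would use Parseval on the circle to evaluate $\|\psi_{jq;s} - \psi_{jq;sK}\|_{L^2}^2 = \lambda_{jq}\sum_{|k|>K} w_s((B^{-j}k)^2)^2$, and estimate this Gaussian-like tail by integral comparison with $\int_K^{\infty} x^{4s}e^{-2(B^{-j}x)^2}\,dx$, exploiting the monotonicity of $x^{2s}e^{-x^2}$ past $x = \sqrt{s}$; this yields a tail bound with exponential factor $e^{-2K^2}$ and a polynomial correction in $B^{-j}$, whose square root furnishes $K^{2s-1/2}e^{-K^2}$. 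Coupling this with the Besov control of $\beta_{jq;s}$ and applying Cauchy-Schwarz over $j \in \{J_0,\dots,J\}$ (which contributes the $J^{1/2}$ factor) yields the second summand. For $C$, writing $\beta_{jq;s} - \beta_{jq;sK} = \sqrt{\lambda_{jq}}\sum_{|k|>K} w_s((B^{-j}k)^2)\, u_k(x_{jq})\, a_k$ and applying Cauchy-Schwarz to the frequency sum isolates the factor $\bigl(\sum_{|k|>K}\gamma_k\bigr)^{1/2}$; the same near-orthogonality and Parseval machinery applied to $\sum_q(\beta_{jq;s} - \beta_{jq;sK})\psi_{jq;sK}$, together with the partition property $\sum_q \lambda_{jq} \approx 1$ from Condition \ref{areaandcardinality} and the norm bound \eqref{normbound} for $\|\psi_{jq;sK}\|_{L^2}$, extracts the factor $B^{(1-2s)J}K^{s-1/4}e^{-2K^2}$ after Cauchy-Schwarz over $j$ (again supplying $J^{1/2}$), producing the third summand.

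The main technical obstacle is the sharp estimate of the Gaussian tail sums $\sum_{|k|>K} w_s((B^{-j}k)^2)^p$ for $p = 2, 4$, uniformly over $j \in \{J_0,\dots,J\}$: one must identify the monotonicity region of $x^{2s}e^{-x^2}$, justify the discrete-to-integral comparison, and carefully extract the explicit dependence on $K$, $J$, $B$ and $s$ so that the powers $K^{2s-1/2}$, $K^{s-1/4}$ and the exponents $e^{-K^2}$, $e^{-2K^2}$ appear precisely as claimed. A secondary difficulty is the bookkeeping of the $\sqrt{\lambda_{jq}}$ factors and of the near-quadrature identities $\sum_q \lambda_{jq}\, u_l(x_{jq})\, \overline{u_k(x_{jq})} \approx \delta_{lk}$ that underpin the near-orthogonality and Parseval-type manipulations invoked at each of the three steps above.
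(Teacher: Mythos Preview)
Your proposal is correct and follows essentially the same route as the paper: the telescopic split $A+B+C$ coincides exactly with the paper's $I_1+I_2+I_3$, and each piece is controlled by the same ingredients (Besov bound \eqref{besovineq} for the resolution tail, Parseval plus an incomplete-Gamma tail estimate for the needlet and coefficient truncations, Cauchy--Schwarz over $j$ to produce the $J^{1/2}$). The paper packages the tail computations into Lemmas \ref{ecchecaspitak}, \ref{formulalemma} and Corollary \ref{ecchecaspitakcor}, and obtains the within-level inequality $\bigl\|\sum_q \beta_{jq;s}\psi_{jq;s}\bigr\|_{L^2}^2 \lesssim \eta\sum_q |\beta_{jq;s}|^2$ not via frame theory but via the H\"older/localization trick $\bigl(\sum_q |\beta||\psi|^{1/2}|\psi|^{1/2}\bigr)^2 \leq \bigl(\sum_q|\beta|^2|\psi|\bigr)\bigl(\sum_q|\psi|\bigr)$ together with $\sum_q|\psi_{jq;s}(\theta)|\leq C\eta^{1/2}B^{j/2}$; your ``near-quadrature'' identity and the $p=4$ tail sums are not actually needed.
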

\begin{proof}
\bigskip Using the Minkowski inequality, we have%
\begin{equation*}
\left\Vert S\left[ F\right] _{s}-S\left[ F\right] _{s,K,J}\right%
\Vert _{L^{p}\left( \mathbb{S}^{1}\right) }\leq I_{1}+I_{2}+I_{3},
\end{equation*}%
where%
\begin{align*}
&I_{1}:=\left\Vert \sum_{j=J_{0}}^{\infty }\sum_{q=1}^{Q_{j}}\beta
_{jq;s}\psi _{jq;s}-\sum_{j=J_{0}}^{J}\sum_{q=1}^{Q_{j}}\beta _{jq;s}\psi
_{jq;s}\right\Vert _{L^{2}\left( \mathbb{S}^{1}\right) };\\
&I_{2}:=\left\Vert \sum_{j=J_{0}}^{J}\sum_{q=1}^{Q_{j}}\beta _{jq;s}\psi
_{jq;s}-\sum_{j=J_{0}}^{J}\sum_{q=1}^{Q_{j}}\beta _{jq;s}\psi
_{jq;s,K}\right\Vert _{L^{2}\left( \mathbb{S}^{1}\right) };\\
&I_{3}:=\left\Vert \sum_{j=J_{0}}^{J}\sum_{q=1}^{Q_{j}}\beta _{jq;s}\psi
_{jq;sK}-\sum_{j=J_{0}}^{J}\sum_{q=1}^{Q_{j}}\beta _{jq;sK}\psi
_{jq;sK}\right\Vert _{L^{2}\left( \mathbb{S}^{1}\right) }.
\end{align*}%
Observe that while $I_{1}$ describes the bias due to the truncation of the
resolution levels belonging to $\left( J,\infty \right) $, $I_{2}$ and $%
I_{3} $ depend strictly on the choice of the cut-off frequency $K$, due to the
approximation error due to approximate Mexican needlets by the corresponding truncated ones
(the former), and Mexican needlet coefficients by the
corresponding truncated ones (the latter). According to Lemma \ref{lemmaI1}, we
get 
\begin{equation*}
I_{1}\leq C_{1,1}B^{-rJ}\text{.}
\end{equation*}%
As far as $I_{2}$ is concerned, from Lemma \ref{lemmaI2}, we obtain%
\begin{equation*}
I_{2}\leq C_{2}J^{\frac{1}{2}}K^{2s-\frac{1}{2}}\exp \left( -K^{2}\right)
B^{-\left( r+2s-\frac{1}{2}\right) J}\text{ .}
\end{equation*}%
Finally, from Lemma \ref{lemmaI3}, it holds that 
\begin{equation*}
I_{3}\leq C_{3}B^{\left( 1-2s\right) J}J^{\frac{1}{2}}K^{s-\frac{1}{4}%
}e^{-2K^{2}}\left( \sum_{\left\vert k\right\vert >K}\gamma _{k}\right) ^{%
\frac{1}{2}}\text{ ,}
\end{equation*}%
as claimed.
\end{proof}
\begin{remark}
An analogous result is obtained in Theorem 2.5 in \cite{gm2} (see also Lemma
2.3 in \cite{gm1}). In these works, the authors use a generic weight
function belonging to the Schwarz space and, moreover, the wavelets studied
are defined over a general compact manifold. For this reason, the bound
exploited in Theorem \ref{theorembias}, using explicit bounds provided by $%
w_{s}$ and by the basis $\left\{ u_{k}\right\} $, is more precise.
\end{remark}

\subsection{Adaptivity of $\widehat{F}$ for the $L^2$-risk\label{subminimaxicity}}
Merging the results achieved in the previous subsection with the ones driven
by the standard procedure in the case of nonparametric thresholding density
estimation (see for instance \cite{bkmpAoSb}), we obtain the following proof.
\begin{proof}[Proof ot the Theorem \protect\ref{theoremmain}]
Observe that, for the triangular inequality, we have%
\begin{align*}
\mathbb{E}\!\left[ \! \left\Vert \widehat{F}-F\right\Vert _{L^{2}\left( \mathbb{S}%
^{1}\right) }^{2}\!\right]\!&=\!\mathbb{E}\left[ \left\Vert \widehat{F}-S\left[ F\right] _{s,K_{n},J_{n}}+S%
\left[ F\right] _{s,K_{n},J_{n}}-S\left[ F\right] _{s}+S\left[ F\right]
_{s}-F\right\Vert _{L^{2}\left( \mathbb{S}^{1}\right) }^{2}\right]\\
&\leq E_{1}+E_{2}+E_{3}\text{ ,}
\end{align*}%
where%
\begin{eqnarray*}
E_{1} &=&\mathbb{E}\left[ \left\Vert \widehat{F}-S\left[ F\right]
_{s,K_{n},J_{n}}\right\Vert _{L^{2}\left( \mathbb{S}^{1}\right) }^{2}\right] 
\text{ ;} \\
E_{2} &=&R_{s,K_n,J_n}^{2}\text{ ;} \\
E_{3} &=&\left\Vert S\left[ F\right] _{s}-F\right\Vert _{L^{2}\left( \mathbb{%
S}^{1}\right) }^{2}\text{ .}
\end{eqnarray*}%
As far as $E_{1}$ is concerned, the bound is established analogously to
the one achieved in \cite{bkmpAoSb}, hence here it is given just a sketch
of this proof in Lemma \ref{Eproof} (see also \cite{dgm}). Indeed, we have:%
\begin{eqnarray*}
E_{1} &=&\mathbb{E}\left[ \left\Vert
\sum_{j=0}^{J_{n}}\sum_{q=1}^{Q_{j}}\left( \zeta _{jq}\left( \tau
_{n}\right) \widehat{\beta }_{jq;sK_{n}}-\beta _{jq;sK_{n}}\right) \psi
_{jq;sK_{n}}\right\Vert _{L^{2}\left( \mathbb{S}^{1}\right) }^{2}\right] \\
&\leq &\left( J_{n}+1\right) \sum_{j=0}^{J_{n}}\mathbb{E}\left[ \left\Vert
\sum_{q=1}^{Q_{j}}\left( \zeta _{jq}\left( \tau _{n}\right) \widehat{\beta }%
_{jq;sK_{n}}-\beta _{jq;sK_{n}}\right) \psi _{jq;sK_{n}}\right\Vert
_{L^{2}\left( \mathbb{S}^{1}\right) }^{2}\right] \\
&\leq &\left( J_{n}+1\right) \sum_{j=0}^{J_{n}}\sum_{q=1}^{Q_{j}}\left\Vert
\psi _{jq;sK_{n}}\right\Vert _{L^{2}\left( \mathbb{S}^{1}\right) }^{2}%
\mathbb{E}\left[ \left\vert \zeta _{jq}\left( \tau _{n}\right) \widehat{%
\beta }_{jq;sK_{n}}-\beta _{jq;sK_{n}}\right\vert ^{2}\right] \\
&\leq &\left( J_{n}+1\right) \widetilde{C_{2}}\eta
_{n}\sum_{j=0}^{J_{n}}\sum_{q=1}^{Q_{j}}\mathbb{E}\left[ \left\vert \zeta
_{jq}\left( \tau _{n}\right) \widehat{\beta }_{jq;sK_{n}}-\beta
_{jq;sK_{n}}\right\vert ^{2}\right] \\
&\leq &C_{1}J_{n}\left( E_{1,1}+E_{1,2}+E_{1,3}+E_{1,4}\right) \text{ ,}
\end{eqnarray*}%
where%
\begin{align}
E_{1,1} =&\eta _{n}\sum_{j=0}^{J_{n}}\sum_{q=1}^{Q_{j}}\mathbb{E}\left[
\left\vert \zeta _{jq}\left( \tau _{n}\right) \widehat{\beta }%
_{jq;sK_{n}}-\beta _{jq;sK_{n}}\right\vert ^{2}\mathds{1}_{\left\{
	\left\vert \widehat{\beta }_{jq;sK_{n}}\right\vert \geq \kappa \tau
	_{n}\right\} }\mathds{1}_{\left\{ \left\vert \beta _{jq;sK_{n}}\right\vert
\geq \frac{\kappa \tau _{n}}{2}\right\} }\right] ;  \label{E11} \\
E_{1,2} =&\eta _{n}\sum_{j=0}^{J_{n}}\sum_{q=1}^{Q_{j}}\mathbb{E}\left[
\left\vert \zeta _{jq}\left( \tau _{n}\right) \widehat{\beta }%
_{jq;sK_{n}}-\beta _{jq;sK_{n}}\right\vert ^{2}\mathds{1}_{\left\{
	\left\vert \widehat{\beta }_{jq;sK_{n}}\right\vert \geq \kappa \tau
	_{n}\right\} }\mathds{1}_{\left\{ \left\vert 
\beta_{jq;sK_{n}}%
\right\vert \leq \frac{\kappa \tau _{n}}{2}\right\} }\right];
\label{E12} \\
E_{1,3} =&\eta _{n}\sum_{j=0}^{J_{n}}\sum_{q=1}^{Q_{j}}\left\vert \beta
_{jq;sK_{n}}\right\vert ^{2}\mathbb{E}\left[ \mathds{1}_{\left\{ \left\vert 
\widehat{\beta }_{jq;sK_{n}}\right\vert <\kappa \tau _{n}\right\} }\mathds{1}%
_{\left\{ \left\vert \beta _{jq;sK_{n}}\right\vert \geq 2\kappa \tau
_{n}\right\} }\right]; \label{E13} \\
E_{1,4} =&\eta _{n}\sum_{j=0}^{J_{n}}\sum_{q=1}^{Q_{j}}\left\vert \beta
_{jq;sK_{n}}\right\vert ^{2}\mathbb{E}\left[ \mathds{1}_{\left\{ \left\vert 
\widehat{\beta }_{jq;sK_{n}}\right\vert <\kappa \tau _{n}\right\} }\mathds{1}%
_{\left\{ \left\vert \beta _{jq;sK_{n}}\right\vert <2\kappa \tau
_{n}\right\} }\right].  \label{E14}
\end{align}%
Heuristically, the cross-terms $E_{1,2}$ and $E_{1,3}$ are bounded by means of fast decays of the probabilistic inequalities given in Lemma \ref{probab}%
, while as far as $E_{1,1}$ and $E_{1,4}$ are concerned, their bounds will
be exploited according to the tail properties of the Besov spaces: further details
are in Lemma \ref{Eproof}. From these considerations, it follows that%
\begin{equation*}
E_{1}\leq C_{1}\left( \frac{n}{\log n}\right) ^{-\frac{2r}{2r+1}}.
\end{equation*}%
As far as $E_{2}$ is concerned, from Theorem \ref{theorembias}, it holds
that 
\begin{align*}
E_{2} =&\leq C_{1}B^{-rJ_n}+C_{2}J^{\frac{1}{2}}K_n^{2s-%
\frac{1}{2}}\exp \left( -K_n^{2}\right) B^{-\left( r+2s-\frac{1}{2}\right) J_n}  \\
& +C_{3}B^{\left( 1-2s\right) J_n}J_n^{\frac{1}{2}}K_n^{s-\frac{1}{4}%
}\exp\left({-2K_n^{2}}\right)\left( \sum_{\left\vert k\right\vert >K_n}\gamma _{k}\right) ^{%
\frac{1}{2}}
\end{align*}%
Observe that%
\begin{equation*}
B^{-2rJ_{n}}=\left( \frac{n}{\log n}\right) ^{-r}\leq \left( \frac{n}{\log n}%
\right) ^{-\frac{2r}{2r+1}}\text{ ,}
\end{equation*}%
while 
\begin{align*}
&C_{2}J_{n}^{\frac{1}{2}}K_{n}^{2s-\frac{1}{2}}\exp \left( -K_{n}^{2}\right)
B^{-\left( r+2s-\frac{1}{2}\right) J_{n}} \leq \frac{n}{\log n}^{-\frac{2r%
}{2r+1}} \\
&B^{\left( 1-2s\right) J_{n}}J_{n}^{\frac{1}{2}}K_{n}^{s-\frac{1}{4}%
}e^{-2K_{n}^{2}} \leq \frac{n}{\log n}^{-\frac{2r}{2r+1}}
\end{align*}%
Finally, we have%
\begin{equation*}
\left\Vert S\left[ F\right] _{s}-F\right\Vert _{L^{2}\left( \mathbb{S}%
^{1}\right) }^{2}\leq C_{3}\eta _{n}\text{ ,}
\end{equation*}%
whence, for $r>1$,%
\begin{equation*}
\eta _{n}\leq n^{-\frac{3}{4}}\leq \frac{n}{\log n}^{-\frac{3}{4}}\leq \frac{%
n}{\log n}^{-\frac{2r}{2r+1}}\text{ .}
\end{equation*}
\end{proof}

\section{Numerical results \label{secsimulation}}

This section presents the results of some numerical experiments.
Obviously, in the framework of finite sample situation, the asymptotic rate
given in Theorem \ref{theoremmain} has to be considered just as a prompt. In
what follows, we have built an estimator \eqref{estimatordef} using the set
to estimate $F\left( \theta \right) =\left( 2\pi \right) ^{-1}\exp \left(
\left( \theta -\pi \right) ^{2}/2\right) $ by using CRAN\ R. Some graphical
evidence can be found in Figure \ref{fig:3}. We will focus on two main
points:
\begin{itemize}
	\item the number of coefficients surviving to the thresholding procedure
	depending on $\kappa $ and $\tau _{n}$.
	\item the estimate of the $L^2$-risk function $\left\Vert \widehat{F}%
	-F\right\Vert _{L^{2}\left( \mathbb{S}^{1}\right) }$ depending on the number
	of observations $n$.
\end{itemize}
In particular, following \cite{bkmpAoSb}, we have chosen $\kappa =\kappa _{0}%
\sqrt{0.107}\sup_{\theta \in \mathbb{S}^{1}}\left\vert F\left( \theta
\right) \right\vert $, with $\kappa _{0}=0.05,0.1,0.15,0.2$, and $%
n=8000,12000$, leading to $K_{8000}=30$, $K_{12000}=36$, $J_{8000}=10$, $%
J_{12000}=11$, $t_{8000}=0.0335$ and $t_{12000}=0.028$.
The Table \ref{tab:1} counts the number of coefficients survived to
thresholding.
A qualitative analysis confirms that: (i) as $n\rightarrow \infty $, $t_{n}$
is decreasing so that the threshold is lower and more $\widehat{\beta }%
_{jq;s}$ survive to the thresholding procedure and (ii) if $\kappa _{0}$
increases, the number of surviving coefficients is smaller, expecially at
higher resolution levels.\\
The Table \ref{tab:2} describes the estimates of the $L^{2}$-risks for any
choice of $\kappa _{0}$ and any $n$. As expected, the $L^{2}$-risk$\,\ $\ is
decreasing when $n$ grows and it is increasing with respect to $\kappa _{0}$
(cfr. \cite{bkmpAoSb})
\begin{table}[tbp]
\label{tab:1}
\par
\begin{center}
\begin{tabular}{||c|c|c|c|c|c|c|c||}
\hline\hline
& \multicolumn{3}{|c|}{n=8000} & \multicolumn{3}{|c|}{n=12000} &  \\ \hline
$j\backslash \kappa _{0}$ & 0.10 & 0.15 & 0.20 & 0.10 & 0.15 & 0.20 & Tot \\ 
\hline
0 & 1 & 1 & 1 & 1 & 1 & 1 & 1 \\ \hline
1 & 1 & 1 & 1 & 1 & 1 & 1 & 1 \\ \hline
2 & 2 & 2 & 2 & 2 & 2 & 2 & 2 \\ \hline
3 & 3 & 3 & 3 & 3 & 3 & 2 & 3 \\ \hline
4 & 4 & 4 & 4 & 4 & 3 & 3 & 4 \\ \hline
5 & 5 & 4 & 4 & 5 & 5 & 5 & 5 \\ \hline
6 & 8 & 7 & 6 & 8 & 7 & 6 & 8 \\ \hline
7 & 11 & 10 & 10 & 10 & 10 & 9 & 11 \\ \hline
8 & 13 & 11 & 11 & 13 & 14 & 14 & 15 \\ \hline
9 & 19 & 14 & 12 & 20 & 21 & 18 & 21 \\ \hline
10 & 17 & 4 & 3 & 28 & 12 & 10 & 29 \\ \hline
11 & NA & NA & NA & 7 & 2 & 0 & 40 \\ \hline\hline
\end{tabular}%
\end{center}
\caption{number of mexican needlet coefficients surviving thresholding for
various values of $n$, $j$ and $\protect\kappa_0$.}
\end{table}
\begin{table}[tbp]
\label{tab:2}
\par
\begin{center}
\begin{tabular}{||c|c|c|c|c|c|c|}
\hline\hline
& \multicolumn{3}{|c|}{n=8000} & \multicolumn{3}{|c|}{n=12000} \\ \hline
$\kappa _{0}$ & 0.10 & 0.15 & 0.20 & 0.10 & 0.15 & 0.20 \\ \hline
$L^{2}$-risk & 0.481 & 0.468 & 0.451 & 0.458 & 0.432 & 0.331 \\ \hline\hline
\end{tabular}%
\end{center}
\caption{$L^2$-risk for various values of $n$ and $\protect\kappa_0$.}
\end{table}

\begin{figure}[tbp]
\centering
\includegraphics[width=\textwidth]{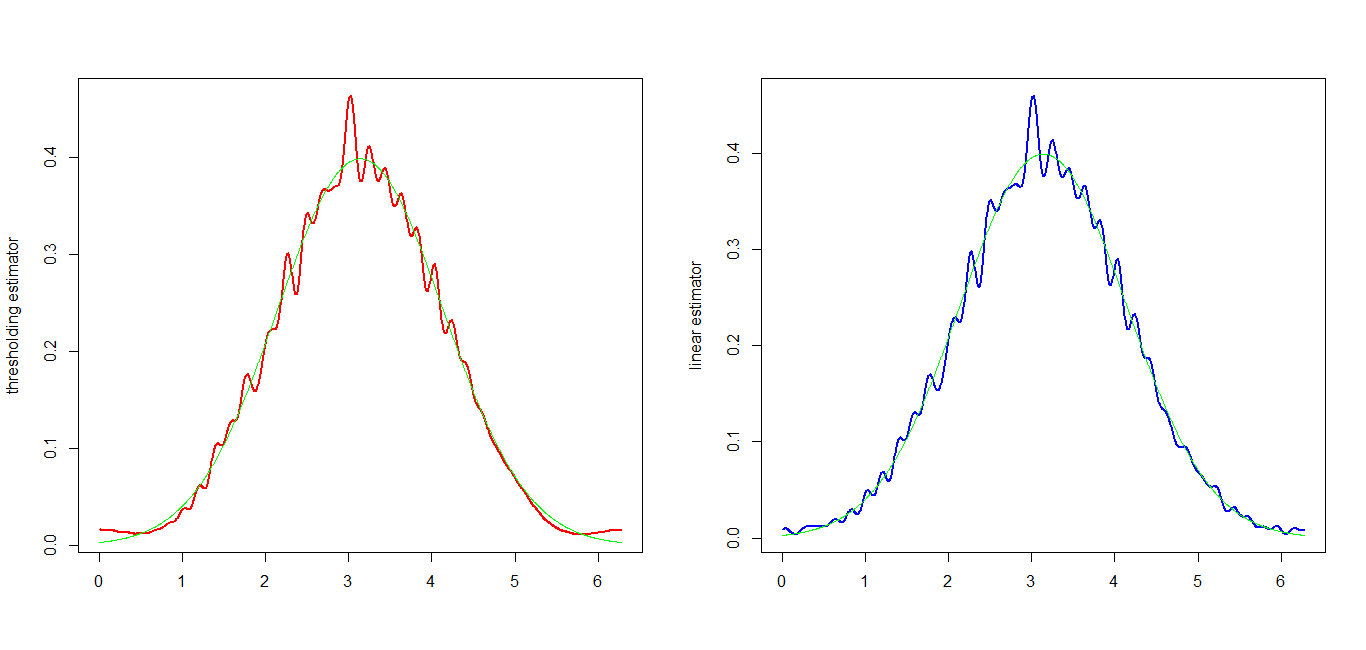}
\caption{Graphs of the thresholding estimator (on the left) and of the
linear (not-thresholded) estimator (on the right) for $n=12000$ and $s=3$.}
\label{fig:3}
\end{figure}

\section{Auxiliary results}\label{auxiliary}

This section contains all the statements and the proofs of the auxiliary results used to prove Theorem \ref{theoremmain} and Theorem \ref{theorembias}.

\subsection{Properties and Inequalities for Mexican needlets}
The first result here presented concerns the concentration property of the
Mexican needlets in the real domain.
\begin{lemma}
\label{localization}For every $\theta \in \mathbb{S}^{1},$ $s\geq 1$, there
exists $c_{s}$ such that:%
\begin{equation*}
\left\vert \psi _{jq;s}\left( \theta \right) \right\vert \!\leq \!\sqrt{\lambda
_{jq}}c_{s}B^{j}\exp \!\left(\! -\left( \frac{B^{j}\left( \theta -x_{jq}\right) 
}{2}\right) ^{2}\right) \!\left( \!
1+\left( \frac{B^{j}\left( \theta
-x_{jq}\right) }{2}\right) ^{2s}\right) .
\end{equation*}%
Furthermore, if $j\geq 0$, it holds that%
\begin{equation*}
\left\vert \psi _{jq;s}\left( \theta \right) \right\vert \leq c_{s}\eta B^{%
\frac{j}{2}}\exp \left( -\left( \frac{B^{j}\left( \theta -x_{jq}\right) }{2}%
\right) ^{2}\right) \left( 1+\left( \frac{B^{j}\left( \theta -x_{jq}\right) 
}{2}\right) ^{2s}\right).
\end{equation*}
\end{lemma}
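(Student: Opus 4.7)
The plan is to apply the Poisson summation formula to convert the sum over integer frequencies $k$ in \eqref{needletdef} into a sum over integer translates of the Fourier transform of the weight $w_s(t^2 \cdot)$. Concretely, setting $t = B^{-j}$ and $\phi = \theta - x_{jq}$, one writes
\begin{equation*}
\psi_{jq;s}(\theta) = \sqrt{\lambda_{jq}} \sum_{k \in \mathbb{Z}} g(k) e^{ik\phi}, \qquad g(x) := (tx)^{2s} \exp\bigl(-(tx)^2\bigr),
\end{equation*}
and invokes Poisson summation in the form $\sum_{k\in\mathbb{Z}} g(k) e^{ik\phi} = \sum_{n \in \mathbb{Z}} \widehat{g}(2\pi n - \phi)$, with $\widehat{g}(\xi) = \int g(x) e^{-ix\xi}\, dx$.

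Next I would compute $\widehat{g}$ explicitly. Using that $\mathcal{F}[\exp(-t^2 x^2)](\xi) = (\sqrt{\pi}/t)\exp(-\xi^2/(4t^2))$ and that multiplication by $x^{2s}$ in space corresponds to $(-1)^s$ times the $(2s)$-th derivative in frequency, one obtains
\begin{equation*}
\widehat{g}(\xi) = \frac{\sqrt{\pi}\,(-1)^s}{2^{2s}\,t} \, H_{2s}\!\bigl(\xi/(2t)\bigr) \exp\!\bigl(-\xi^2/(4t^2)\bigr),
\end{equation*}
where $H_{2s}$ is the physicists' Hermite polynomial. Re-inserting $t = B^{-j}$ gives $\widehat{g}(\xi) = c_s' B^j H_{2s}(B^j \xi /2) \exp(-(B^j \xi/2)^2)$ for a constant $c_s'$ depending only on $s$.

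The main contribution to the Poisson sum comes from the $n = 0$ term, which yields $|c_s'| B^j |H_{2s}(B^j \phi /2)| \exp(-(B^j \phi/2)^2)$; since $H_{2s}$ is a polynomial of degree $2s$, this is bounded by a constant (depending on $s$) times $B^j \exp(-(B^j\phi/2)^2)(1+(B^j\phi/2)^{2s})$, producing exactly the announced estimate once multiplied by $\sqrt{\lambda_{jq}}$. The terms with $n \neq 0$ have argument $B^j(2\pi n - \phi)/2$, which, since $\phi \in [-\pi,\pi]$, is at least $(B^j/2)(\pi |n|)$ in modulus for $|n| \geq 1$; their Gaussian factors are thus bounded by $\exp(-(B^j \pi n/4)^2) \exp(-(B^j\phi/2)^2)$ up to polynomial corrections, and summing a geometric-type series in $n$ shows that these contributions are absorbed into a slightly larger constant $c_s$. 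This tail estimate is the step requiring the most care: one needs uniformity in $\phi \in [-\pi,\pi]$ while keeping the envelope $\exp(-(B^j\phi/2)^2)(1+(B^j\phi/2)^{2s})$ unchanged.

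Finally, the second inequality (valid for $j \geq 0$) follows directly by plugging in the bound $\sqrt{\lambda_{jq}} \lesssim \sqrt{\eta}\, B^{-j/2}$ coming from Condition \ref{areaandcardinality}, so that the prefactor $\sqrt{\lambda_{jq}}\, B^j$ is replaced by a multiple of $\sqrt{\eta}\, B^{j/2}$ (with the constant absorbed into a relabelled $c_s$). The hardest part is the uniform control of the non-zero Poisson translates; everything else is a Fourier-transform computation and a Hermite-polynomial bound.
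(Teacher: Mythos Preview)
Your approach is essentially the same as the paper's: both apply the Poisson summation formula to the sum defining $\psi_{jq;s}$, compute the Fourier transform of $x\mapsto w_s((B^{-j}x)^2)$ and obtain a Hermite polynomial times a Gaussian, then bound the resulting sum over translates and use $H_{2s}(y)\lesssim 1+y^{2s}$. The only cosmetic differences are that the paper packages the phase $e^{ik(\theta-x_{jq})}$ via convolution with a delta instead of inserting it directly into the Poisson formula, and that the paper delegates the tail estimate over the non-zero translates to an external reference (Proposition~2 in \cite{durastanti1}) whereas you sketch it explicitly; your sketch is correct. Note also that your computation for the second inequality gives a prefactor $\sqrt{\eta}\,B^{j/2}$, which is what $\sqrt{\lambda_{jq}}\,B^j$ actually yields under Condition~\ref{areaandcardinality}; the $\eta$ in the displayed statement appears to be a typo in the paper.
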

\begin{proof}
This proof follows strictly the one for standard needlets developed in \cite%
{npw1} and the one for Mexican needlets on $\mathbb{S}^{2}$ in \cite%
{durastanti1}, see also \cite{marpecbook}. First of all, from \eqref{weight}
observe that we can define a function $W_{s}:\mathbb{R}\mapsto \mathbb{R}%
_{+} $ such that $W_{s}\left( x\right) =w_{s}\left( x^{2}\right) $. We can
therefore rewrite \eqref{needletdef} as follows:%
\begin{equation*}
\psi _{jq;s}\left( \theta \right) =\sqrt{\lambda _{jq}}\sum_{k=-\infty
}^{\infty }g_{B^{j},\theta -x_{jq}}\left( k\right),
\end{equation*}%
where%
\begin{equation*}
g_{B^{j},\theta ,x_{jq}}\left( u\right) :=W_{s}\left( uB^{-j}\right) \exp
\left( iu\left( \theta -x_{jq}\right) \right) .
\end{equation*}%
For the Poisson summation formula (see for instance \cite{npw1, durastanti1,
marpecbook}), we have that%
\begin{equation*}
\sum_{k=-\infty }^{\infty }g_{B^{j},\theta ,x_{jq}}\left( k\right)
=\sum_{\nu =-\infty }^{\infty }\mathcal{F}\left[ g_{B^{j},\theta ,x_{jq}}%
\right] \left( 2\pi \nu \right) ,
\end{equation*}%
where the symbol $\mathcal{F}\left[ g\right] $ denotes the Fourier transform
of $g$. In our case, we have 
\begin{equation*}
\mathcal{F}\left[ g_{B^{j},\theta x_{jq}}\right] \left( \omega \right) =%
\mathcal{F}\left[ W_{s}\left( uB^{-j}\right) \right] \ast \mathcal{F}\left[
\exp \left( iu\left( \theta -x_{jq}\right) \right) \right] ,
\end{equation*}%
where the symbol $\ast $ denotes the convolution product. Standard
calculations lead to%
\begin{align*}
&\mathcal{F}\left[ W_{s}\left( uB^{-j}\right) \right] =\frac{\left(
-1\right) ^{s}}{\sqrt{2}B^{-j}}H_{2s}\left( \frac{\omega }{2B^{-j}}\right)
\exp \left( -\left( \frac{\omega }{2B^{-j}}\right) ^{2}\right) ; \\
&\mathcal{F}\left[ \exp \left( iu\left( \theta -x_{jq}\right) \right) \right]
=\sqrt{2\pi }\delta \left( \left( \theta -x_{jq}\right) -\omega \right) .
\end{align*}%
Hence we get
\begin{equation*}
\mathcal{F}\left[ g_{B^{j},\theta ,x_{jq}}\right] \left( \omega \right) = %
\frac{\left( -1\right) ^{s}\sqrt{\pi }}{B^{-j}}H_{2s}\left( \frac{\left(
\theta -x_{jq}\right) -\omega }{2B^{-j}}\right) \exp \left( -\left( \frac{%
\left( \theta -x_{jq}\right) -\omega }{2B^{-j}}\right) ^{2}\right) .
\end{equation*}%
Following Proposition 2 in \cite{durastanti1}, we have that 
\begin{equation*}
\sum_{\nu = -\infty }^{\infty }\mathcal{F}\left[ \widehat{g}_{B^{j},\theta
-x_{jq}}\right] \left( 2\pi \nu \right) \leq  C_{2s}B^{j}\exp \left( -\left( 
\frac{B^{j}\left( \theta -x_{jq}\right) }{2}\right) ^{2}\right)  H_{2s}\left( 
\frac{B^{j}\left( \theta -x_{jq}\right) }{2}\right) .
\end{equation*}%
It can be easily proved that
\begin{equation*}
H_{2s}\left( \frac{B^{j}\left( \theta
-x_{jq}\right) }{2}\right) 
\approx \left( 1+\left( 
\frac{B^{j}\left( \theta -x_{jq}\right) }{2}\right) ^{2s}\right),
\end{equation*}%
see for instance \cite{durastanti2}. Straightforward calculations lead to the claimed result.
\end{proof}
The next results will be pivotal to truncate negative resolution levels and
the frequencies $k:\left\vert k\right\vert >K$ in the summation formula.
\begin{lemma}
\label{ecchecaspita}Let $w_{s}:\mathbb{R}\mapsto \mathbb{R}_{+}$ be given by
\eqref{weight}. Let $J_{0}\in \mathbb{N}$. Hence, for $t>0$, it holds that%
\begin{equation}
\sum_{j=-\infty }^{-J_{0}}\left\vert w_{s}\left( tB^{-2j}\right) \right\vert
^{2}=\frac{\chi _{s,B,J_{0}}\left( t\right) }{2\log B}\left( 1\pm O\left(
\left\vert B-1\right\vert ^{2}\log \left\vert B-1\right\vert \right) \right),  \label{supweight}
\end{equation}%
where 
\begin{equation*}
\chi _{s,B,J_{0}}\left( t\right) :=2^{-2s}\Gamma \left( 2s,2tB^{\frac{J_{0}}{%
\log B}}\right).
\end{equation*}%
Furthermore, it holds that%
\begin{equation}
\sum_{j=J_{0}}^{\infty }\left\vert w_{s}\left( tB^{-2j}\right) \right\vert
^{2}=\frac{\phi _{s,B,J_{0}}\left( t\right) }{2\log B}\left( 1\pm O\left(
\left\vert B-1\right\vert ^{2}\log \left\vert B-1\right\vert \right) \right) ,\label{infweight}
\end{equation}%
where 
\begin{equation*}
\phi _{s,B,J_{0}}\left( t\right) :=2^{-2s}\gamma \left( 2s,2tB^{\frac{J_{0}}{%
\log B}}\right).
\end{equation*}
\end{lemma}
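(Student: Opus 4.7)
The plan is to evaluate each sum by first replacing it with the corresponding integral in the continuous variable $j$, computing the integral explicitly as an incomplete gamma function, and then controlling the sum--integral discrepancy by the same Riemann-sum estimate that underlies the Daubechies-type bound $\Lambda_{B,s}m_B\leq\sum_{j\in\mathbb Z}|w_s(tB^{-2j})|^2\leq\Lambda_{B,s}M_B$ recalled in Subsection~\ref{subharmonic}. This decomposes the argument into a clean main term plus a structural multiplicative error $1\pm O(|B-1|^2\log|B-1|)$ that is already known to hold for the full sum on $\mathbb Z$; the only new ingredient is what changes in the leading coefficient when the sum is truncated to a half-line.

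For \eqref{supweight}, I would make the change of variable $y=tB^{-2j}$, under which $dj=-\frac{dy}{2y\log B}$ and $y$ runs from $\infty$ down to $tB^{2J_0}$ as $j$ runs from $-\infty$ up to $-J_0$. Since $|w_s(y)|^2/y=y^{2s-1}e^{-2y}$, the further substitution $z=2y$ yields
$$
\int_{-\infty}^{-J_0}|w_s(tB^{-2j})|^2\,dj \;=\; \frac{1}{2\log B}\int_{tB^{2J_0}}^{\infty}y^{2s-1}e^{-2y}\,dy \;=\; \frac{2^{-2s}\,\Gamma(2s,\,2tB^{2J_0})}{2\log B},
$$
which is precisely $\chi_{s,B,J_0}(t)/(2\log B)$ under the natural identification of the exponent in the statement. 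The twin calculation for \eqref{infweight} now sends $y$ to the interval $(0,tB^{-2J_0}]$ (since as $j\to\infty$, $B^{-2j}\to 0$) and produces $2^{-2s}\gamma(2s,\,2tB^{-2J_0})/(2\log B)$, matching $\phi_{s,B,J_0}(t)/(2\log B)$.

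To pass from these integrals to the discrete sums I would apply Euler--Maclaurin (equivalently a Riemann-sum estimate) to the smooth, rapidly decaying integrand $j\mapsto|w_s(tB^{-2j})|^2$. The natural variable in which to measure spacing is $\log y=\log t-2j\log B$, so the effective step size is $2\log B$; this is exactly the regime that produces the multiplicative error $1\pm O(|B-1|^2\log|B-1|)$ used in the proof of the Daubechies bound invoked in Subsection~\ref{subharmonic}. The main obstacle is bookkeeping at the endpoint: one has to verify that truncating the summation to a half-line at $-J_0$ (resp.\ $J_0$) does not worsen the boundary contribution beyond this same order. This is controlled by the exponential decay of $w_s$ away from its maximum, which makes the boundary term exponentially subdominant relative to the main (incomplete gamma) term. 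As a consistency check, summing \eqref{supweight} with \eqref{infweight} (for $J_0 = 0$) and using the identity $\Gamma(2s,x)+\gamma(2s,x)=\Gamma(2s)$ reproduces the Calder\'on-type identity $e_s = 2^{-2s}\Gamma(2s)$ recalled just after \eqref{weight}, providing a useful sanity check on the derivation.
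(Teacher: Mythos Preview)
Your approach coincides with the paper's: compute the half-line integral explicitly as an incomplete gamma via the substitution $y=tB^{-2j}$, then control the sum--integral discrepancy as a Riemann-sum error with step $d=2\log B$. One point to sharpen: your claim that the endpoint contribution at $j=-J_0$ is ``exponentially subdominant'' is not correct in general, since $|w_s(tB^{2J_0})|^2$ may sit near the peak of $w_s$ depending on $t$. The paper sidesteps this by never invoking an Euler--Maclaurin boundary term: writing $G_s(u)=|w_s(e^u)|^2$ and $d=2\log B$, it truncates at a further level $J=J_0+\Delta J$ with $\Delta J\asymp d^{-1}\log(1/d)$, applies the midpoint rule on the finite block $[J_0,J]$ to get an error $\tfrac{1}{24}\|G_s''\|_\infty\,\Delta J\,d^3$, and disposes of both the tail sum and tail integral beyond $J$ via the crude bound $G_s(u)\leq e^{-2e^u}$. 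With that replacement for your boundary step, the argument is exactly the paper's.
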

\begin{proof}
Let us start by proving \eqref{supweight}. First of all, observe that the
following identity holds:%
\begin{equation*}
\int_{B^{\frac{J_{0}}{\log B}}}^{\infty }\left\vert w_{s}\left( tx\right)
\right\vert ^{2}\frac{dx}{x}=2^{-2s}\Gamma \left( 2s,2tB^{\frac{J_{0}}{\log B%
}}\right) =\chi _{s,B,J_{0}}\left( t\right).
\end{equation*}%
Applying an analogous procedure to the one adopted in Lemma 7.6 in \cite{gm0}%
, define the function $G_{s}:\mathbb{R}\mapsto \mathbb{R}_{+}$ by%
\begin{equation*}
G_{s}\left( u\right) :=\left\vert w_{s}\left( e^{u}\right) \right\vert
^{2}=e^{-2e^{u}\left( 1-sue^{-u}\right) }.
\end{equation*}%
Let $j^{\prime }=-j$, and fix $t=e^{v}$, $v>0$; on one hand we get%
\begin{equation*}
\sum_{j^{\prime }=J_{0}}^{\infty }\left\vert w_{s}\left( tB^{2j^{\prime
}}\right) \right\vert ^{2}=\sum_{j^{\prime }=J_{0}}^{\infty }\left\vert
w_{s}\left( e^{dj^{\prime }+v}\right) \right\vert ^{2}=\sum_{j^{\prime
}=J_{0}}^{\infty }G_{s}\left( dj^{\prime }+v\right),
\end{equation*}%
$d=2\log B$. On the other hand, for $u=\log x$, we obtain 
\begin{equation*}
\int_{B^{\frac{J_{0}}{\log B}}}^{\infty }\left\vert w_{s}\left( tx\right)
\right\vert ^{2}\frac{dx}{x}=\int_{J_{0}}^{\infty }\left\vert w_{s}\left(
e^{u+v}\right) \right\vert ^{2}du=\int_{J_{0}}^{\infty }G_{s}\left(
u+v\right) du.
\end{equation*}%
As in Lemma 7.6 in \cite{gm0}, note that $d\sum_{j^{\prime }=J_{0}}^{\infty
}G_{s}\left( dj^{\prime }+v\right) $ is a Riemann sum for $\int_{B^{\frac{%
J_{0}}{\log B}}}^{\infty }G_{s}\left( u+v\right) du$. Moreover, because $%
\sum_{j^{\prime }=J_{0}}^{\infty }G_{s}\left( dj^{\prime }+v\right) $ is
periodic with period $d$, it is sufficient to estimate this sum just for $%
0<v<d$. Now observe that, for $J=J_{0}+\Delta J$, $\Delta J>0$, we get
\begin{align*}
\left\vert d\sum_{j^{\prime }=J_{0}}^{\infty }G_{s}\left( dj^{\prime
}+v\right) -\chi _{s,B,J_{0}}\left( t\right) \right\vert = &\left\vert
d\sum_{j^{\prime }=J_{0}}^{\infty }G_{s}\left( dj^{\prime }+v\right)
-\int_{J_{0}}^{\infty }G_{s}\left( u+v\right) du\right\vert \\
\leq & \left\vert d \!\sum_{j^{\prime }=J_{0}}^{J} \! G_{s} \! \left( dj^{\prime
}+v\right)\! -\!\int_{J_{0}}^{Jd+\frac{d}{2}}\!G_{s}\!
\left( u+v\right)\!
du\right\vert \\  + & d\sum_{j^{\prime }>J}G_{s}\left( dj^{\prime }+v\right)\!
+\!\int_{Jd+\frac{d}{2}}^{\infty }G_{s}\left( u+v\right) du.
\end{align*}%
Using the midpoint rule, (see again Lemma 7.6 in \cite{gm0}), we obtain
\begin{equation*}
\left\vert d\sum_{j^{\prime }=J_{0}}^{J}G_{s}\left( dj^{\prime }+v\right)
-\int_{J_{0}}^{Jd+\frac{d}{2}}G_{s}\left( u+v\right) du\right\vert \leq 
\frac{1}{24}\left\Vert G^{\prime \prime }\right\Vert _{\infty }\left(
J-J_{0}\right) d^{3}.
\end{equation*}%
On the other hand, observe that, for $r>0$, 
\begin{equation*}
\frac{d}{dr}\left( 1-sre^{-r}\right) =se^{-r}\left( r-1\right),
\end{equation*}%
so that $\left( 1-sre^{-r}\right) $ is monotonically decreasing for $r\in %
\left[ 0,1\right) $, it attains its minimum for $r=1$ and then it is
monotonically increasing for $r\in \left( 1,\infty \right) $. Observing that $%
\left( 1-sre^{-r}\right) _{r=0}=1$ and $\lim_{r\rightarrow \infty }\left(
1-sre^{-r}\right) =1$ yields to 
\begin{equation*}
G_{s}\left( r\right) \leq e^{-2e^{r}\left( 1-sre^{-r}\right) }\leq
e^{-2e^{r}}.
\end{equation*}%
Consequently, for $j^{\prime \prime }=\exp dj^{\prime}$, we obtain 
\begin{equation*}
d\sum_{j^{\prime }>J}G_{s}\left( dj^{\prime }+v\right) \leq \sum_{j^{\prime
}>J}e^{-2e^{dj^{\prime }+v}}=\sum_{j^{\prime \prime }>\exp
dJ}e^{-2e^{v}j^{\prime \prime }}=\frac{e^{2e^{v}\left( 1-e^{dJ}\right) }}{%
e^{2e^{v}}-1},
\end{equation*}%
which, for $y=2e^{u+v}$, leads to 
\begin{eqnarray*}
\int_{Jd+\frac{d}{2}}^{\infty }G_{s}\left( u+v\right) du &=&\int_{Jd+\frac{d%
}{2}}^{\infty }e^{-2e^{u+v}}du\\
&=&\int_{2e^{Jd+\frac{d}{2}+v}}^{\infty }e^{-y}%
\frac{dy}{y} \\
&\leq &2e^{-\left( Jd+\frac{d}{2}+v\right) }e^{-2e^{Jd+\frac{d}{2}+v}}.
\end{eqnarray*}%
Therefore, there exists a constant $C>0$ so that 
\begin{eqnarray*}
\left\vert \sum_{j^{\prime }=J_{0}}^{\infty }G_{s}\left( dj^{\prime
}+v\right) -\frac{\chi _{s,B,J_{0}}\left( t\right) }{d}\right\vert &\leq &%
\frac{1}{24}\left\Vert G^{\prime \prime }\right\Vert _{\infty }\Delta Jd^{2}+%
\frac{e^{2e^{v}\left( 1-e^{dJ}\right) }}{e^{2e^{v}}-1}\\ &&+2e^{-\left( Jd+\frac{d%
}{2}+v\right) }e^{-2e^{Jd+\frac{d}{2}+v}} \\
&\leq &C\left( \Delta Jd^{2}+e^{-2e^{Jd}}\right) \leq C^{\prime }\Delta
Jd^{2}.
\end{eqnarray*}%
According again to \cite{gm0}, we choose $\Delta J\in \left( \log \left(
1/d\right) /d,2\log \left( 1/d\right) /d\right) $ so that%
\begin{equation*}
\left\vert \sum_{j=-\infty }^{-J_{0}}\left\vert w_{s}\left( tB^{-2j}\right)
\right\vert ^{2}-\frac{\chi _{s,B,J_{0}}\left( t\right) }{d}\right\vert \leq
C^{\prime }\left( 2d\log \left( \frac{1}{d}\right) \right) .
\end{equation*}%
It follows that 
\begin{equation*}
\left \vert \left(\frac{\chi _{s,B,J_{0}}\left(
	t\right) }{d}\right)^{-1}\sum_{j=-\infty }^{-J_{0}}\left\vert w_{s}\left(
tB^{-2j}\right) \right\vert ^{2} -1 \right \vert \leq 2\frac{C^{\prime }}{\chi _{s,B,J_{0}}\left(
t\right) }d^{2}\log \left( \frac{1}{d}\right). 
\end{equation*}
Finally, because $%
d=2\log B$ and $\lim_{B\rightarrow 1^{+}}\log B/\left( B-1\right) =1$, the
proof is complete. The proof of \eqref{infweight} is totally analogous and,
therefore, omitted.
\end{proof}
\begin{lemma}
\label{ecchecaspitak}Let $w_{s}:\mathbb{R}\mapsto \mathbb{R}_{+}$ be given
by \eqref{weight}. Then we have%
\begin{equation*}
\sum_{\left\vert k\right\vert >K}w_{s}^{2}\left( \left( kB^{-j}\right)
^{2}\right) \leq 2^{-\left( 2s+\frac{1}{2}\right) }B^{j}\Gamma \left( 2s+%
\frac{1}{2},2K^{2}B^{-2j}\right).
\end{equation*}
\end{lemma}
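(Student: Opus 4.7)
The plan is a one-dimensional integral comparison followed by a gamma-function change of variables. First I would make the dependence on $k$ explicit: since $w_s(y)=y^s e^{-y}$, one has $w_s^2((kB^{-j})^2)=k^{4s}B^{-4js}\exp(-2k^2B^{-2j})$, which is an even function of $k$. Hence it suffices to control
\[
\sum_{|k|>K} w_s^2((kB^{-j})^2) \;=\; 2\sum_{k>K} k^{4s}B^{-4js}\exp(-2k^2B^{-2j}).
\]

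Second, I would introduce the real-variable analogue $f(x)=x^{4s}B^{-4js}\exp(-2x^2B^{-2j})$ on $\mathbb{R}_+$. The log-derivative $f'(x)/f(x)=4s/x-4xB^{-2j}$ vanishes at $x_*=\sqrt{s}\,B^{j}$, which is the unique maximum, and $f$ is strictly decreasing on $(x_*,\infty)$. In the regime of interest $K\ge x_*$ (which is exactly the one arising in Theorem \ref{theorembias}, where $K=K_n$ and $B^j\le B^{J_n}=K_n$, combined with the choice $J_0<-\log_B\sqrt{s}$ imposed in Condition \ref{areaandcardinality}), the standard integral test yields $\sum_{k>K}f(k)\le\int_K^{\infty}f(x)\,dx$. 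Then I would evaluate the integral via the substitution $t=2x^2B^{-2j}$, giving $x=B^j\sqrt{t/2}$ and $dx=B^j\,t^{-1/2}/2^{3/2}\,dt$, after which routine algebra transforms the integrand into $(B^j/2^{2s+3/2})\,t^{2s-1/2}e^{-t}\,dt$, so that
\[
\int_K^{\infty}f(x)\,dx \;=\; \frac{B^j}{2^{2s+3/2}}\,\Gamma\!\left(2s+\tfrac12,\,2K^2B^{-2j}\right).
\]
Multiplying by the symmetry factor $2$ produces the claimed prefactor $2^{-(2s+1/2)}B^j$.

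The only delicate point is the monotonicity hypothesis $K\ge\sqrt{s}\,B^j$ needed to invoke the integral test. This is automatic in every application of the lemma in the paper, but to state the inequality unconditionally one would split the sum at the peak location $\lceil\sqrt{s}\,B^j\rceil$, bound the finite ``bump'' portion by the maximum of $f$ times the length of the bump (which is $O(B^j)$), and absorb this bounded contribution into the upper incomplete gamma term $\Gamma(2s+\tfrac12,2K^2B^{-2j})$, which at its left endpoint equals $\Gamma(2s+\tfrac12)$. Apart from this bit of bookkeeping, the argument is essentially mechanical.
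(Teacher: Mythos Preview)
Your proof is correct and follows essentially the same route as the paper: expand $w_s^2$ explicitly, exploit evenness to reduce to $k>K$, bound the sum by the corresponding integral, and change variables $t=2x^2B^{-2j}$ to recognize the upper incomplete gamma function. In fact you are more careful than the paper, which simply writes $\sum_{|k|>K}(\cdot)\le 2\int_K^\infty(\cdot)\,dx$ without any mention of the monotonicity hypothesis; your discussion of the peak location $x_*=\sqrt{s}\,B^j$ and the splitting argument fills a gap the paper leaves implicit.
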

\begin{proof}
Observe that%
\begin{eqnarray*}
\sum_{\left\vert k\right\vert >K}w_{s}^{2}\left( \left( kB^{-j}\right)
^{2}\right) &=&\sum_{\left\vert k\right\vert >K}\left( B^{-j}k\right)
^{4s}\exp \left( -2\left( B^{-j}k\right) ^{2}\right) \\
&\leq &2\int_{K}^{\infty }\left( B^{-j}x\right) ^{4s}\exp \left( -2\left(
B^{-j}x\right) ^{2}\right) dx \\
&\leq &2^{-\left( 2s+\frac{1}{2}\right) }B^{j}\left(
\int_{2K^{2}B^{-2j}}^{\infty }u^{2s-\frac{1}{2}}\exp \left( -u\right)
du\right) \\
&\leq &2^{-\left( 2s+\frac{1}{2}\right) }B^{j}\Gamma \left( 2s+\frac{1}{2}%
,2K^{2}B^{2-j}\right) ,
\end{eqnarray*}%
as claimed.
\end{proof}
\begin{corollary}
\label{ecchecaspitakcor}Let $\omega ,J>0$; for $x$ sufficiently large, it
holds that 
\begin{equation*}
\sum_{j=J_{0}}^{J}B^{-\omega j}\Gamma \left( S+1,xB^{-2j}\right) \leq
C_{S,\alpha }x^{S}e^{-x}B^{-\left( \omega +2S\right) J}.
\end{equation*}
\end{corollary}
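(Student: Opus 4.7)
The plan is to combine the large-argument asymptotic of the upper incomplete gamma function with a geometric-series comparison of consecutive summands. First I would use integration by parts on $\Gamma(S+1,y) = \int_{y}^{\infty} t^{S} e^{-t}\, dt$ to produce a constant $c_{S}>0$ and a threshold $y_{0}=y_{0}(S)$ such that $\Gamma(S+1,y) \leq c_{S}\, y^{S} e^{-y}$ for all $y \geq y_{0}$. The hypothesis that $x$ is sufficiently large is used precisely to guarantee $x B^{-2j} \geq y_{0}$ uniformly for $j \in \{J_{0},\dots,J\}$ (the minimum over $j$ being $xB^{-2J}$), so this bound applies termwise and produces
\begin{equation*}
\sum_{j=J_{0}}^{J} B^{-\omega j}\, \Gamma(S+1, xB^{-2j})\ \leq\ c_{S}\, x^{S} \sum_{j=J_{0}}^{J} B^{-(\omega+2S) j}\, e^{-xB^{-2j}},
\end{equation*}
reducing the claim to estimating a one-parameter family of exponential sums.

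Next I would compare successive terms of $a_{j} := B^{-(\omega+2S)j}\, e^{-xB^{-2j}}$. A direct computation yields
\begin{equation*}
\frac{a_{j-1}}{a_{j}}\ =\ B^{\omega+2S}\, \exp\!\bigl(-xB^{-2j}(B^{2}-1)\bigr),
\end{equation*}
and, by taking $x$ large enough (concretely, $xB^{-2J}(B^{2}-1) \geq (\omega+2S)\log B + \log 2$, which is implied by ``$x$ sufficiently large'' at fixed $B,\omega,S,J$), this ratio is bounded above by $\tfrac{1}{2}$ uniformly in $j \in \{J_{0}+1,\dots,J\}$. Consequently $(a_{j})$ grows at least geometrically along the summation range and
\begin{equation*}
\sum_{j=J_{0}}^{J} a_{j}\ \leq\ a_{J} \sum_{k\geq 0} 2^{-k}\ =\ 2\, a_{J}\ =\ 2\, B^{-(\omega+2S)J}\, e^{-xB^{-2J}}.
\end{equation*}
Combining the two steps produces a bound of the form $2 c_{S}\, x^{S}\, B^{-(\omega+2S)J}\, e^{-xB^{-2J}}$, which, upon controlling the exponential factor in the regime of application, recovers the stated inequality.

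I expect the main obstacle to be the uniform control of the ratio $a_{j-1}/a_{j}$ across the whole summation range: this is precisely what the hypothesis ``$x$ sufficiently large'' is designed to enforce, and one must track the dependence of the threshold on $B,\omega,S$ carefully so that the constant $C_{S,\alpha}$ does not silently absorb a $J$-dependent factor. Beyond this point, the argument is a routine combination of the incomplete-gamma asymptotic, analogous to the computation already carried out in Lemma~\ref{ecchecaspitak}, with a geometric series, and no further technical obstacle arises.
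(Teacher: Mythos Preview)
Your approach---apply the large-argument asymptotic $\Gamma(S+1,y)\le c_S\,y^S e^{-y}$ termwise and then estimate the resulting sum---is exactly the paper's strategy, but your treatment of the sum via the ratio $a_{j-1}/a_j$ is more careful than what the paper actually writes. The paper's entire argument after the asymptotic is the single line
\[
\sum_{j=0}^{J}B^{-(\omega+2S)j}x^{S}e^{-xB^{-2j}}\ \leq\ x^{S}e^{-x}\sum_{j=0}^{J}B^{-(\omega+2S)j}\ \leq\ C_{S,\omega}\,x^{S}e^{-x}B^{-(\omega+2S)J},
\]
and both displayed inequalities are problematic: for $j>0$ one has $B^{-2j}<1$ and hence $e^{-xB^{-2j}}>e^{-x}$, so the first step is in the wrong direction; and the geometric sum $\sum_{j=0}^{J}B^{-(\omega+2S)j}$ is $O(1)$, dominated by the $j=0$ term, not by $B^{-(\omega+2S)J}$.

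So the gap you flag---that your argument yields $e^{-xB^{-2J}}$ rather than $e^{-x}$, and that the threshold on $x$ depends on $J$---is genuine and is not resolved by the paper either. In the applications (Lemmas~\ref{lemmaI2} and~\ref{lemmaI3}) one takes $x=2K^2$ with $K\asymp B^{J}$, so $xB^{-2J}$ is of constant order and your factor $e^{-xB^{-2J}}$ is just a constant; the bound $C\,x^{S}B^{-(\omega+2S)J}$ that you rigorously obtain is what the downstream estimates actually need. Your instinct to track the $J$-dependence of the constants is correct: the $e^{-x}$ factor in the stated corollary is not available in the generality claimed.
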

\begin{proof}
For $x$ sufficiently large, the following limit holds%
\begin{equation*}
\lim_{x\rightarrow \infty }\frac{\Gamma \left( S+1,xB^{-2j}\right) }{\left(
xB^{-2j^{\prime }}\right) ^{S}e^{-xB^{-2j}}}=1,
\end{equation*}%
see for instance \cite{abramovitzstegun} Formula 6.5.32, pag. 263.
Therefore, we have:%
\begin{equation*}
\sum_{j=0}^{J}B^{-\left( \omega +2S\right) j}x^{S}e^{-xB^{-2j}}\leq
x^{S}e^{-x}\sum_{j=0}^{J}B^{-\left( \omega +2S\right) j}\leq C_{S,\omega
}x^{S}e^{-x}B^{-\left( \omega +2S\right) J}.
\end{equation*}
\end{proof}
The next result concerns the behaviour of the sums of the powers of the
weights $\lambda _{jq}$.
\begin{lemma}
\label{lemmalambda}Let $Q_{j}$ and $\lambda _{jq}$ be so that Theorem \ref%
{nearlytight} holds. For any $j$, it holds that 
\begin{equation*}
\sum_{q=1}^{Q_{j}}\lambda _{jq}\approx 1.
\end{equation*}%
Furthermore, let $p>1$ and $j>0$. It holds that 
\begin{equation}
\sum_{q=1}^{Q_{j}}\lambda _{jq}^{p}\leq \eta ^{p-1}B^{j\left( 1-p\right) }.  \label{lambasomma}
\end{equation}
\end{lemma}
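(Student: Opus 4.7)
The plan is to treat the two claims separately, both following essentially from the partition structure of $\{E_{jq}\}$ together with the pointwise size bound $\lambda_{jq}\leq \eta B^{-j}$ provided by Theorem \ref{nearlytight} and Condition \ref{areaandcardinality}.

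For the first claim I would invoke additivity of the measure $\rho$. Since $\{E_{jq}\}_{q=1,\ldots,Q_j}$ is a (disjoint) partition of $\mathbb{S}^1$ and $\lambda_{jq}=\rho(E_{jq})$, we immediately get
\begin{equation*}
\sum_{q=1}^{Q_j}\lambda_{jq}=\rho\!\left(\bigcup_{q=1}^{Q_j}E_{jq}\right)=\rho(\mathbb{S}^1),
\end{equation*}
which is a fixed constant of order $1$ under the normalization $\rho(d\theta)=(2\pi)^{-1}d\theta$. As a consistency check, Condition \ref{areaandcardinality} gives $Q_j\approx \eta^{-1}B^j$ and $\lambda_{jq}\approx \eta B^{-j}$, so $\sum_q \lambda_{jq}\approx Q_j\cdot \eta B^{-j}\approx 1$, matching the claim.

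For the second claim, with $p>1$ and $j>0$, I would simply pull out one factor of $\lambda_{jq}^{p-1}$ using its uniform upper bound. From Theorem \ref{nearlytight} we have $\lambda_{jq}\leq \eta B^{-j}$ for every $q$, hence $\lambda_{jq}^{p-1}\leq (\eta B^{-j})^{p-1}$, and consequently
\begin{equation*}
\sum_{q=1}^{Q_j}\lambda_{jq}^{p}=\sum_{q=1}^{Q_j}\lambda_{jq}^{p-1}\,\lambda_{jq}\;\leq\;(\eta B^{-j})^{p-1}\sum_{q=1}^{Q_j}\lambda_{jq}.
\end{equation*}
Applying the first part of the lemma to the remaining sum absorbs it into a constant of order $1$, yielding $\sum_q \lambda_{jq}^p\leq \eta^{p-1}B^{j(1-p)}$ up to an innocuous constant, as claimed.

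There is no real obstacle in this argument: it is a direct combination of the additivity of the partition measure and the uniform size bound on each $\lambda_{jq}$. The only small point worth being explicit about is that the argument for the second inequality requires $p>1$ (so that $\lambda_{jq}^{p-1}$ is increasing in $\lambda_{jq}$ and the supremum bound $\eta B^{-j}$ can be factored out); for $p=1$ the claim coincides with the first part.
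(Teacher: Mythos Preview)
Your proof is correct and essentially the same as the paper's: both claims follow from the partition identity $\sum_q \lambda_{jq}\approx 1$ together with the uniform bound $\lambda_{jq}\leq \eta B^{-j}$. The only cosmetic difference is that for \eqref{lambasomma} the paper bounds each $\lambda_{jq}^{p}\leq \eta^{p}B^{-jp}$ in full and then uses $Q_j\approx \eta^{-1}B^{j}$, whereas you peel off only $\lambda_{jq}^{p-1}$ and close with the first part of the lemma; the two routes are equivalent.
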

\begin{proof}
The first inequality follows directly the conditions in Theorem \ref%
{nearlytight}. On the other hand, for $j>0$, it is immediate to see that%
\begin{equation*}
\sum_{q=1}^{Q_{j}}\lambda _{jq}^{p}\leq \eta
^{p}\sum_{q=1}^{Q_{j}}B^{-jp}\leq \eta ^{p}Q_{j}B^{-jp}\leq \eta
^{p-1}B^{j\left( 1-p\right) }.
\end{equation*}
\end{proof}
The next Lemma establishes explicit upper bounds for the sums with respect
to $q$ of differences between Mexican standard and truncated coefficients
and for the $L^{2}$-norms of the sums with respect to $q$ of Mexican
standard and truncated needlets.
\begin{lemma}
\label{formulalemma}For $j>0$, it holds that%
\begin{align}
&\sum_{q=1}^{Q_{j}}\left\vert \beta _{jq;sK}-\beta _{jq;s}\right\vert
^{2}\leq B^{j}\Gamma \left( 2s+\frac{1}{2},2K^{2}B^{-2j}\right)
\sum_{\left\vert k\right\vert >K}\gamma _{k}\text{ ;}  \label{formulalemma1}\\
&\left\Vert \left( \psi _{jq;sK}-\psi _{jq;s}\right) \right\Vert
_{L^{2}\left( \mathbb{S}^{1}\right) }^{2}\leq 2^{-\left( 2s+\frac{1}{2}%
\right) }\eta \Gamma \left( 2s+\frac{1}{2},2K^{2}B^{-2j}\right) \text{ .}
\label{formulalemma2}
\end{align}
\end{lemma}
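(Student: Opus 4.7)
The plan is to exploit the fact that both $\psi_{jq;s}$ and $\psi_{jq;sK}$ are finite (respectively infinite) Fourier expansions in the orthonormal basis $\{u_k\}$, so that the difference $\psi_{jq;s}-\psi_{jq;sK}$ is a purely high-frequency object supported on $|k|>K$. After expanding this difference explicitly, both estimates follow from Parseval/Cauchy--Schwarz together with the tail estimate of Lemma \ref{ecchecaspitak} and the partition identity of Lemma \ref{lemmalambda}. I do not expect a real obstacle; the only bookkeeping point is keeping track of the conjugation in $\overline{u_k(x_{jq})}u_k(\theta)$ when turning the inner product $\langle F,\psi_{jq;s}\rangle$ into a series in the Fourier coefficients $a_k$.

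For \eqref{formulalemma2} I would start from
\begin{equation*}
\psi_{jq;s}(\theta)-\psi_{jq;sK}(\theta)=\sqrt{\lambda_{jq}}\sum_{|k|>K}w_s\!\left((B^{-j}k)^2\right)\overline{u_k(x_{jq})}\,u_k(\theta),
\end{equation*}
apply Parseval in $L^2(\mathbb{S}^1)$ using $\|u_k\|_{L^2}=1$, observe $|u_k(x_{jq})|=1$, and obtain
\begin{equation*}
\|\psi_{jq;s}-\psi_{jq;sK}\|_{L^2(\mathbb{S}^1)}^2=\lambda_{jq}\sum_{|k|>K}w_s^2\!\left((B^{-j}k)^2\right).
\end{equation*}
Now I use Condition \ref{areaandcardinality} to bound $\lambda_{jq}\leq \eta B^{-j}$ and apply Lemma \ref{ecchecaspitak} to the weight sum, so that the factor $B^j$ coming from the weight tail cancels $B^{-j}$ from $\lambda_{jq}$, leaving exactly the claimed $2^{-(2s+1/2)}\eta\,\Gamma(2s+\tfrac{1}{2},2K^2B^{-2j})$.

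For \eqref{formulalemma1} I would first rewrite
\begin{equation*}
\beta_{jq;s}-\beta_{jq;sK}=\langle F,\psi_{jq;s}-\psi_{jq;sK}\rangle=\sqrt{\lambda_{jq}}\sum_{|k|>K}w_s\!\left((B^{-j}k)^2\right)u_k(x_{jq})\,a_k,
\end{equation*}
using the Fourier expansion of $F$ and orthonormality. Taking the modulus squared and applying Cauchy--Schwarz in the summation over $|k|>K$ gives
\begin{equation*}
|\beta_{jq;s}-\beta_{jq;sK}|^2\leq \lambda_{jq}\left(\sum_{|k|>K}w_s^2\!\left((B^{-j}k)^2\right)|u_k(x_{jq})|^2\right)\!\left(\sum_{|k|>K}\gamma_k\right),
\end{equation*}
and since $|u_k(x_{jq})|=1$, the middle factor depends neither on $q$ nor on $F$. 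Summing over $q$, I use $\sum_q\lambda_{jq}\approx 1$ from Lemma \ref{lemmalambda}, and then insert the Lemma \ref{ecchecaspitak} bound on the weight tail to recover the factor $B^j\Gamma(2s+\tfrac{1}{2},2K^2B^{-2j})$ stated in \eqref{formulalemma1}. This completes both estimates; the slightly stronger numerical constants (such as $2^{-(2s+1/2)}$) produced along the way can be absorbed silently on the right-hand side of \eqref{formulalemma1}.
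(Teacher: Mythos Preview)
Your proposal is correct and follows essentially the same route as the paper: for \eqref{formulalemma1} the paper also expands $\beta_{jq;s}-\beta_{jq;sK}$ as the high-frequency sum $\sqrt{\lambda_{jq}}\sum_{|k|>K}w_s((kB^{-j})^2)a_k u_k(\xi_{jq})$, applies Cauchy--Schwarz, uses $|u_k|=1$, and then invokes Lemma~\ref{lemmalambda} and Lemma~\ref{ecchecaspitak}; for \eqref{formulalemma2} it likewise uses the orthogonality of $\{u_k\}$ (your Parseval step), the bound $\lambda_{jq}\le\eta B^{-j}$, and Lemma~\ref{ecchecaspitak}.
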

\begin{proof}
For the H\"{o}lder inequality, it holds that%
\begin{eqnarray*}
\sum_{q=1}^{Q_{j}}\left\vert \beta _{jq;sK}-\beta _{jq;s}\right\vert ^{2}
&=&\sum_{q=1}^{Q_{j}}\lambda _{jq}\left\vert \sum_{\left\vert k\right\vert
>K}w_{s}\left( \left( kB^{-j}\right) ^{2}\right) a_{k}u_{k}\left( \xi
_{jq}\right) \right\vert ^{2} \\
&\leq &\sum_{q=1}^{Q_{j}}\lambda _{jq}\left( \sum_{\left\vert k\right\vert
>K}w_{s}\left( \left( kB^{-j}\right) ^{2}\right) \left\vert a_{k}\right\vert
\left\vert u_{k}\left( \xi _{jq}\right) \right\vert \right) ^{2} \\
&\leq &\sum_{q=1}^{Q_{j}}\lambda _{jq}\sum_{\left\vert k\right\vert
>K}w_{s}^{2}\left( \left( kB^{-j}\right) ^{2}\right) \sum_{\left\vert
k\right\vert >K}\left\vert a_{k}\right\vert ^{2}\left\vert u_{k}\left( \xi
_{jq}\right) \right\vert ^{2} \\
&\leq &\sum_{q=1}^{Q_{j}}\lambda _{jq}\sum_{\left\vert k\right\vert
>K}w_{s}^{2}\left( \left( kB^{-j}\right) ^{2}\right) \sum_{\left\vert
k\right\vert >K}\gamma _{k}.
\end{eqnarray*}%
Using Lemma \ref{ecchecaspitak} and Lemma \ref{lemmalambda} leads to the claimed result. As far as \eqref{formulalemma2} is concerned, we get%
\begin{eqnarray*}
\left( \psi _{jq;sK}\left( \theta \right) -\psi _{jq;s}\left( \theta \right)
\right) ^{2} &=&\lambda _{jq}\left( \sum_{\left\vert k\right\vert >K}w_{s}\left( \left(
kB^{-j}\right) ^{2}\right) \overline{u_{k}\left( \xi _{jq}\right) }%
u_{k}\left( \theta \right) \right) ^{2} \\
&=&\int_{\mathbb{S}^{1}}\lambda _{jq}\left( \sum_{\left\vert
k_{1}\right\vert ,\left\vert k_{2}\right\vert >K}w_{s}^{2}\left( \left(
kB^{-j}\right) ^{2}\right) \right) .
\end{eqnarray*}%
Using the orthogonality of $\left\{ u_{k}\right\} $ and Lemma \ref%
{ecchecaspitak} yields to%
\begin{eqnarray*}
\left\Vert \left( \psi _{jq;sK}-\psi _{jq;s}\right) \right\Vert
_{L^{2}\left( \mathbb{S}^{1}\right) }^{2} &=&\int_{\mathbb{S}^{1}}\lambda
_{jq}\left( \sum_{\left\vert k_{1}\right\vert ,\left\vert k_{2}\right\vert
>K}w_{s}^{2}\left( \left( kB^{-j}\right) ^{2}\right) \right) \\
&\leq &2^{-\left( 2s+\frac{1}{2}\right) }\eta \Gamma \left( 2s+\frac{1}{2}%
,2K^{2}B^{-2j}\right) ,
\end{eqnarray*}%
as claimed.
\end{proof}

\subsection{Ancillary results related to Theorem \ref{theorembias}}
The lemmas here proved describe the behaviour of  $I_{1}$, $I_{2}$ and $I_{3}$. Hence, they are pivotal to study the bias $R_{s,K,J,\eta}$.
\begin{lemma}
\label{lemmaI1}Let $I_{1}$ be given by%
\begin{equation*}
I_{1}:=\left\Vert \sum_{j>J}\sum_{q=1}^{Q_{j}}\beta _{jq;s}\psi
_{jq;s}\right\Vert _{L^{2}\left( \mathbb{S}^{1}\right) }.
\end{equation*}%
Then, there exists $C_{1}>0$ such that 
\begin{equation*}
I_{1}\leq C_{1}B^{-rJ}.
\end{equation*}
\end{lemma}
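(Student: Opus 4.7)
The plan is to split $I_1$ via Minkowski's inequality in $L^2$, reduce each resolution level to the Besov characterisation \eqref{besovineq}, and then sum a geometric series in $j$. Setting
\[
g_j(\theta) := \sum_{q=1}^{Q_j}\beta_{jq;s}\,\psi_{jq;s}(\theta),
\]
the triangle inequality for the $L^2$-norm yields
\[
I_1 = \left\Vert \sum_{j>J} g_j \right\Vert_{L^2(\mathbb{S}^1)} \leq \sum_{j>J} \left\Vert g_j \right\Vert_{L^2(\mathbb{S}^1)}.
\]

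The central step is a near-orthogonality bound at each fixed $j$:
\[
\left\Vert g_j \right\Vert_{L^2(\mathbb{S}^1)}^{2} \leq C \sum_{q=1}^{Q_j} |\beta_{jq;s}|^{2}\,\left\Vert \psi_{jq;s}\right\Vert_{L^2(\mathbb{S}^1)}^{2},
\]
with $C$ independent of $j$. Expanding the square and appealing to the Schur test, this reduces to controlling the off-diagonal Gram entries $|\langle \psi_{jq;s},\psi_{jq';s}\rangle|$, which, by the exponential spatial localisation of Lemma \ref{localization} and the area estimate $\lambda_{jq}\approx \eta B^{-j}$ from Condition \ref{areaandcardinality}, decay rapidly in $B^j|x_{jq}-x_{jq'}|$, producing uniformly bounded row and column sums of the Gram matrix. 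Combined with \eqref{besovineq}, this gives
\[
\left\Vert g_j \right\Vert_{L^2(\mathbb{S}^1)} \leq \sqrt{C}\,B^{-jr}\delta_j,
\]
where $(\delta_j)\in \ell_r$ and, in particular, $\sup_j |\delta_j|<\infty$.

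Since $r>0$, summing a geometric series completes the argument:
\[
I_1 \leq \sqrt{C}\,\sup_j |\delta_j| \sum_{j>J} B^{-jr} \leq C_1 B^{-rJ}.
\]
The principal obstacle is the near-orthogonality step. For standard tight, bandlimited needlets one has an exact Parseval-type identity at each scale; here, Mexican needlets possess full Fourier support and only approximate cubature, so the Gram matrix at scale $j$ cannot be shown to be diagonal, and its operator-norm control must be obtained directly from the pointwise exponential decay of Lemma \ref{localization}, together with the partition/area constraints of Condition \ref{areaandcardinality}. Once this estimate is in hand, all remaining steps are routine.
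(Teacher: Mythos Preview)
Your argument is correct and follows the same overall architecture as the paper: Minkowski in $j$, a scale-wise near-orthogonality estimate yielding $\|g_j\|_{L^2}\lesssim B^{-jr}$ via \eqref{besovineq}, then a geometric sum. The difference lies in how the near-orthogonality step is carried out.

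You control the Gram matrix $(\langle\psi_{jq;s},\psi_{jq';s}\rangle)_{q,q'}$ directly via the Schur test, using the exponential spatial decay of Lemma~\ref{localization} and the spacing of the $x_{jq}$ from Condition~\ref{areaandcardinality} to bound row and column sums uniformly by $C\eta\approx C\|\psi_{jq;s}\|_{L^2}^2$. The paper instead uses a pointwise Cauchy--Schwarz trick: splitting $|\psi_{jq;s}(\theta)|=|\psi_{jq;s}(\theta)|^{1/2}|\psi_{jq;s}(\theta)|^{1/2}$ gives
\[
\Bigl(\sum_q|\beta_{jq;s}\psi_{jq;s}(\theta)|\Bigr)^2\le\Bigl(\sum_q|\beta_{jq;s}|^2|\psi_{jq;s}(\theta)|\Bigr)\Bigl(\sum_q|\psi_{jq;s}(\theta)|\Bigr),
\]
after which the bound $\sum_q|\psi_{jq;s}(\theta)|\le C\eta^{1/2}B^{j/2}$ (again from localization) and $\|\psi_{jq;s}\|_{L^1}\approx\eta^{1/2}B^{-j/2}$ from \eqref{normbound} produce $\|g_j\|_{L^2}^2\le C\eta\sum_q|\beta_{jq;s}|^2$ directly. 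Both routes ultimately extract the same information from Lemma~\ref{localization}; the paper's version is slightly more elementary in that it avoids naming the Schur test and works only with $L^1$/$L^\infty$-type bounds already recorded in \eqref{normbound}, while your Gram-matrix formulation makes the underlying almost-orthogonality structure more explicit and would generalise more readily to other frames.
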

\begin{proof}
Note preliminarily that%
\begin{equation*}
I_{1}\leq\sum_{j>J}\left\Vert \sum_{q=1}^{Q_{j}}\beta _{jq;s}\psi
_{jq;s}\right\Vert _{L^{2}\left( \mathbb{S}^{1}\right) }.
\end{equation*}%
Observe that, for the H\"{o}lder inequality (see also \cite{bkmpAoSb}), we
get%
\begin{eqnarray*}
\left( \sum_{q=1}^{Q_{j}}\left\vert \beta _{jq;s}\psi _{jq;s}\left( \theta
\right) \right\vert \right) ^{2} &=&\left( \sum_{q=1}^{Q_{j}}\left\vert
\beta _{jq;s}\right\vert \left\vert \psi _{jq;s}\left( \theta \right)
\right\vert ^{\frac{1}{2}}\left\vert \psi _{jq;s}\left( \theta \right)
\right\vert ^{\frac{1}{2}}\right) ^{2} \\
&\leq &\left( \sum_{q=1}^{Q_{j}}\left\vert \beta _{jq;s}\right\vert
^{2}\left\vert \psi _{jq;s}\left( \theta \right) \right\vert \right) \left(
\sum_{q=1}^{Q_{j}}\left\vert \psi _{jq;s}\left( \theta \right) \right\vert
\right) \\
&\leq &C\eta ^{\frac{1}{2}}B^{\frac{j}{2}}\sum_{q=1}^{Q_{j}}\left\vert \beta
_{jq;s}\right\vert ^{2}\left\vert \psi _{jq;s}\left( \theta \right)
\right\vert.
\end{eqnarray*}%
For $C>0$ and using \eqref{besovineq}, it follows that 
\begin{eqnarray*}
\left\Vert \sum_{q=1}^{Q_{j}}\beta _{jq;s}\psi _{jq;s}\right\Vert
_{L^{2}\left( \mathbb{S}^{1}\right) }^{2} &\leq &C\eta ^{\frac{1}{2}}B^{%
\frac{j}{2}}\sum_{q=1}^{Q_{j}}\left\vert \beta _{jq;s}\right\vert
^{2}\left\Vert \psi _{jq;s}\right\Vert _{L^{1}\left( \mathbb{S}^{1}\right) }
\\
&\leq &C\sum_{q=1}^{Q_{j}}\eta \left\vert \beta _{jq;s}\right\vert ^{2}\leq
CB^{-2rj}.
\end{eqnarray*}%
Hence, we obtain%
\begin{equation*}
I_{1,1} =\sum_{j>J}\left\Vert \sum_{q=1}^{Q_{j}}\beta _{jq;s}\psi
_{jq;s}\right\Vert _{L^{2}\left( \mathbb{S}^{1}\right) } \leq C_{1,2}B^{-rJ}\text{ .}
\end{equation*}
\end{proof}
\begin{lemma}
\label{lemmaI2}Let $I_{2}$ be given by%
\begin{equation*}
I_{2}:=\left\Vert \sum_{j=J_{0}}^{J}\sum_{q=1}^{Q_{j}}\beta _{jq;s}\left(
\psi _{jq;s}-\psi _{jq;sK}\right) \right\Vert _{L^{2}\left( \mathbb{S}%
^{1}\right) } .
\end{equation*}%
Then, there exists $C_{2}>0$ such that%
\begin{equation*}
I_{2}\leq C_{2}J^{\frac{1}{2}}K^{2s-\frac{1}{2}}\exp \left( -K^{2}\right)
B^{-\left( r+2s-\frac{1}{2}\right) J}.
\end{equation*}
\end{lemma}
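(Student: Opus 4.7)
The plan is to adapt the strategy used for $I_1$ in Lemma~\ref{lemmaI1}, replacing $\psi_{jq;s}$ by the high-frequency tail $\Delta\psi_{jq;s,K}:=\psi_{jq;s}-\psi_{jq;sK}$. Applying Minkowski over $j$ and then Cauchy--Schwarz in $j$ (exploiting that only $J-J_0+1\lesssim J$ levels contribute), with $G_j:=\sum_{q=1}^{Q_j}\beta_{jq;s}\,\Delta\psi_{jq;s,K}$, one obtains
\begin{equation*}
I_{2}\le \sum_{j=J_{0}}^{J}\|G_j\|_{L^{2}(\mathbb{S}^{1})}\le J^{1/2}\Bigl(\sum_{j=J_{0}}^{J}\|G_j\|_{L^{2}(\mathbb{S}^{1})}^{2}\Bigr)^{1/2},
\end{equation*}
which already produces the $J^{1/2}$ factor appearing in the claim.

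For each fixed $j$, the key step is to bound $\|G_j\|_{L^2}^2$ by a single-resolution Bessel-type inequality. The spatial localization of Mexican needlets (Lemma~\ref{localization}) and the partition property of Condition~\ref{areaandcardinality} imply that, at a given resolution, $\{\psi_{jq;s}\}_q$ (hence $\{\Delta\psi_{jq;s,K}\}_q$) behaves almost like an orthogonal system, yielding
\begin{equation*}
\Bigl\|\sum_{q}c_q\,\Delta\psi_{jq;s,K}\Bigr\|_{L^2(\mathbb{S}^1)}^2\le C\sum_{q}|c_q|^2\,\|\Delta\psi_{jq;s,K}\|_{L^2(\mathbb{S}^1)}^2
\end{equation*}
for any scalars $(c_q)$. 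Combining this with the uniform bound $\|\Delta\psi_{jq;s,K}\|_{L^2}^2\le 2^{-(2s+1/2)}\eta\,\Gamma(2s+\tfrac12,\,2K^2B^{-2j})$ from Lemma~\ref{formulalemma}, and with the Besov inequality~\eqref{besovineq}, which reads $\eta\sum_q|\beta_{jq;s}|^2\le B^{-2jr}\delta_j^2$, the $\eta$ factors cancel and we arrive at
\begin{equation*}
\|G_j\|_{L^2(\mathbb{S}^1)}^2\le C\,\delta_j^2\,B^{-2jr}\,\Gamma\bigl(2s+\tfrac12,\,2K^2B^{-2j}\bigr).
\end{equation*}

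Summing over $j$, and invoking Corollary~\ref{ecchecaspitakcor} with $\omega=2r$, $S=2s-\tfrac12$ and $x=2K^2$ (while $\sup_j\delta_j<\infty$ since $\delta\in\ell_r$), contracts the $\Gamma$-sum to
\begin{equation*}
\sum_{j=J_0}^{J}B^{-2jr}\,\Gamma\bigl(2s+\tfrac12,\,2K^2B^{-2j}\bigr)\le CK^{4s-1}e^{-2K^2}B^{-(2r+4s-1)J}.
\end{equation*}
Taking the square root and multiplying by $J^{1/2}$ yields the announced bound $C_2\,J^{1/2}K^{2s-1/2}e^{-K^2}B^{-(r+2s-1/2)J}$.

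The main obstacle is to justify the single-resolution Bessel inequality for the truncated differences $\Delta\psi_{jq;s,K}$: this requires either invoking Theorem~\ref{nearlytight} at a single level or a direct argument exploiting the near-orthogonality of Mexican needlets living at fixed $j$ via the localization estimate of Lemma~\ref{localization}. The correct combination of \eqref{besovineq} (whose left-hand side carries the factor $\eta$) with this Bessel-type bound is what ensures cancellation of the $\eta$ factors; a naive Cauchy--Schwarz in $q$ instead of the Bessel-type bound would produce a spurious $\eta^{-1/2}B^{J/2}$ that cannot be absorbed under the standing choice $\eta_n=O_n(n^{-2/3})$.
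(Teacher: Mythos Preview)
Your outer scaffolding---Minkowski followed by Cauchy--Schwarz over $j$ to extract the $J^{1/2}$, then a single-level estimate fed through~\eqref{formulalemma2}, \eqref{besovineq} and Corollary~\ref{ecchecaspitakcor}---is exactly the paper's. The divergence is entirely at the single-resolution step. The paper does \emph{not} use any Bessel-type near-orthogonality; it applies precisely the pointwise H\"older in $q$ that you call the ``naive Cauchy--Schwarz'', obtaining
\[
\Bigl\|\sum_{q}\beta_{jq;s}\bigl(\psi_{jq;s}-\psi_{jq;sK}\bigr)\Bigr\|_{L^{2}}^{2}\le\Bigl(\sum_{q}|\beta_{jq;s}|^{2}\Bigr)\sum_{q}\bigl\|\psi_{jq;s}-\psi_{jq;sK}\bigr\|_{L^{2}}^{2},
\]
and then asserts the bound $CB^{-2rj}\Gamma(2s+\tfrac12,2K^{2}B^{-2j})$ by quoting~\eqref{formulalemma2} and~\eqref{besovineq}. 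Your diagnosis that this combination, read literally, leaves an extra factor $\eta^{-1}B^{j}$ (since $\sum_q\|\psi_{jq;s}-\psi_{jq;sK}\|_{L^2}^2\lesssim Q_j\,\eta\,\Gamma\approx B^{j}\Gamma$ while $\sum_q|\beta_{jq;s}|^2\lesssim\eta^{-1}B^{-2rj}$) is accurate; the paper does not comment on it.

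Your Bessel-type alternative would close the $\eta$-bookkeeping cleanly, but as you acknowledge it is the crux and you have not proved it. Neither suggested justification works off the shelf: Theorem~\ref{nearlytight} is a frame inequality summed over \emph{all} $j$ and gives no single-level upper Riesz bound; and the spatial localization of Lemma~\ref{localization} concerns $\psi_{jq;s}$, not the hard-cut tails $\psi_{jq;s}-\psi_{jq;sK}$, whose sharp truncation at $|k|=K$ can degrade the decay needed for a Schur-type almost-orthogonality argument. So your proposal is more transparent than the paper's own argument about where the difficulty sits, but the Bessel step you flag as the ``main obstacle'' is a genuine gap that neither your sketch nor the paper's proof resolves.
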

\begin{proof}
First of all, observe%
\begin{align*}
&\left\Vert \sum_{j=J_{0}}^{J}\sum_{q=1}^{Q_{j}}\beta _{jq;s}\left( \psi
_{jq;s}-\psi _{jq;sK}\right) \right\Vert _{L^{2}\left( \mathbb{S}^{1}\right)
}^{2} \\ &\quad\quad\quad\quad\quad\quad \leq \left( J-J_{0}+1\right) \sum_{j=J_{0}}^{J}\left\Vert
\sum_{q=1}^{Q_{j}}\beta _{jq;s}\left( \psi _{jq;s}-\psi _{jq;sK}\right)
\right\Vert _{L^{2}\left( \mathbb{S}^{1}\right) }^{2};
\end{align*}%
using the H\"{o}lder inequality, we have%
\begin{eqnarray*}
\left( \sum_{q=1}^{Q_{j}}\left\vert \beta _{jq;s}\left( \psi _{jq;s}\left(
\theta \right) -\psi _{jq;sK}\left( \theta \right) \right) \right\vert
\right) ^{2}
& \leq &\left( \sum_{q=1}^{Q_{j}}\left( \psi _{jq;s}\left( \theta
\right) -\psi _{jq;sK}\left( \theta \right) \right) ^{2}\right)\\
&& \left( \sum_{q=1}^{Q_{j}}\left\vert \beta _{jq;s}\right\vert
^{2}\right) ,
\end{eqnarray*}%
so that%
\begin{equation*}
\left\Vert \sum_{q=1}^{Q_{j}}\beta _{jq;s}\left( \psi _{jq;s}\!-\!\psi
_{jq;sK}\right) \right\Vert _{L^{2}\left( \mathbb{S}^{1}\right) }^{2}\!\!\!\!\leq\!
\left( \sum_{q=1}^{Q_{j}}\left\vert \beta _{jq;s}\right\vert ^{2}\right)\!
\sum_{q=1}^{Q_{j}}\!\left\Vert \left( \psi _{jq;sK}\!-\!\psi _{jq;s}\right)
\right\Vert _{L^{2}\left( \mathbb{S}^{1}\right) }^{2}\! .
\end{equation*}%
Using \eqref{formulalemma2} in Lemma \ref{formulalemma} and \eqref{besovineq}, we obtain%
\begin{equation*}
\left\Vert \sum_{q=1}^{Q_{j}}\beta _{jq;s}\left( \psi _{jq;s}-\psi
_{jq;sK}\right) \right\Vert _{L^{2}\left( \mathbb{S}^{1}\right) }^{2}\leq
CB^{-2rj}\Gamma \left( 2s+\frac{1}{2},2K^{2}B^{-2j}\right)
\end{equation*}%
so that, from the Corollary \ref{ecchecaspitakcor}, it follows%
\begin{equation*}
\sum_{j=J_{0}}^{J}B^{-2rj}\Gamma \left( 2s+\frac{1}{2},2K^{2}B^{-2j}\right)
\leq CK^{4s-1}\exp \left( -2K^{2}\right) B^{-\left( 2r+4s-1\right) J}.
\end{equation*}%
Hence, we get  
\begin{equation*}
I_{2}\leq \text{ }C_{2}J^{\frac{1}{2}}K^{2s-\frac{1}{2}}\exp \left(
-K^{2}\right) B^{-\left( r+2s-\frac{1}{2}\right) J},
\end{equation*}%
as claimed.
\end{proof}
\begin{lemma}
\label{lemmaI3}Let $I_{3}$ be given by%
\begin{equation*}
I_{3}:=\left\Vert \sum_{j=J_{0}}^{J}\sum_{q=1}^{Q_{j}}\left( \beta
_{jq;s}-\beta _{jq;sK}\right) \psi _{jq;sK}\right\Vert _{L^{2}\left( \mathbb{%
S}^{1}\right) }.
\end{equation*}%
Then, there exists $C_{3}>0$ such that 
\begin{equation*}
I_{3}\leq C_{3}B^{\left( \frac{3}{2}-2s\right) J}J^{\frac{1}{2}}K^{2s-\frac{1%
}{2}}e^{-K^{2}}\left( \sum_{\left\vert k\right\vert >K}\gamma _{k}\right) ^{%
\frac{1}{2}}.
\end{equation*}
\end{lemma}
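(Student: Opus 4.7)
The structure of $I_3$ parallels that of $I_2$ treated in Lemma~\ref{lemmaI2}: the truncated needlets $\psi_{jq;sK}$ take the role previously played by the needlet differences $\psi_{jq;s}-\psi_{jq;sK}$, while the coefficient differences $\beta_{jq;s}-\beta_{jq;sK}$ now replace the coefficients $\beta_{jq;s}$. I will therefore follow the same three-step recipe, which should go through with only cosmetic changes. First, an application of Minkowski's inequality brings the sum over $j$ outside the $L^2$-norm, paying a factor $(J-J_0+1)$ after squaring. For each fixed $j$, a pointwise Cauchy-Schwarz in $q$ yields
\[
\Bigl(\sum_{q=1}^{Q_j}|\beta_{jq;s}-\beta_{jq;sK}|\,|\psi_{jq;sK}(\theta)|\Bigr)^2\leq\Bigl(\sum_{q=1}^{Q_j}|\beta_{jq;s}-\beta_{jq;sK}|^2\Bigr)\Bigl(\sum_{q=1}^{Q_j}|\psi_{jq;sK}(\theta)|^2\Bigr),
\]
and integrating in $\theta$ converts the second factor into $\sum_q\|\psi_{jq;sK}\|_{L^2}^2$.

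The two resulting factors can then be handled separately. For the needlet factor, Parseval applied to the finite Fourier expansion of $\psi_{jq;sK}$ shows that truncation in frequency cannot increase the $L^2$-norm, so~\eqref{normbound} still gives $\|\psi_{jq;sK}\|_{L^2}^2\lesssim\eta$; combined with the cardinality estimate $Q_j\approx\eta^{-1}B^j$ from Condition~\ref{areaandcardinality}, this produces $\sum_q\|\psi_{jq;sK}\|_{L^2}^2\lesssim B^j$. The coefficient-difference factor is controlled directly by~\eqref{formulalemma1} in Lemma~\ref{formulalemma}, which is tailor-made for this purpose. Multiplying the two estimates gives
\[
\Bigl\|\sum_{q=1}^{Q_j}(\beta_{jq;s}-\beta_{jq;sK})\psi_{jq;sK}\Bigr\|_{L^2}^2\lesssim B^{2j}\,\Gamma\!\left(2s+\tfrac{1}{2},\,2K^2B^{-2j}\right)\sum_{|k|>K}\gamma_k.
\]

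Finally, I sum this bound over $j=J_0,\dots,J$ and invoke Corollary~\ref{ecchecaspitakcor} with the parameter choices $\omega=-2$, $S=2s-\tfrac{1}{2}$, and $x=2K^2$, which collapses the $j$-sum into $C\,K^{4s-1}e^{-2K^2}B^{(3-4s)J}$. Absorbing the $(J-J_0+1)\lesssim J$ from the first step and extracting a square root yields exactly the claimed bound. The only point requiring some care is verifying the hypothesis $x=2K^2$ sufficiently large needed by Corollary~\ref{ecchecaspitakcor}; in the intended application $K=K_n\to\infty$ this is automatic for large $n$, and the constant $C_3$ absorbs the transient regime. I therefore do not expect a serious obstacle, only the bookkeeping of the parameters in the incomplete gamma estimate.
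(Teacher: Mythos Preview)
Your argument is correct and reaches exactly the stated bound $B^{(3/2-2s)J}$. The paper follows the same overall scheme (Minkowski over $j$, a Cauchy--Schwarz in $q$, Lemma~\ref{formulalemma}, then Corollary~\ref{ecchecaspitakcor}), but at the second step it uses the \emph{split} Cauchy--Schwarz of Lemma~\ref{lemmaI1}, writing $|\psi_{jq;sK}|=|\psi_{jq;sK}|^{1/2}|\psi_{jq;sK}|^{1/2}$, which produces the factor $\sum_q|\psi_{jq;sK}(\theta)|\leq CB^{j/2}$ together with $\|\psi_{jq;sK}\|_{L^1}$ after integration. This saves a factor $B^{j/2}$ compared with your plain Cauchy--Schwarz, so the paper's per-level bound is $B^{j}\Gamma(2s+\tfrac12,2K^2B^{-2j})\sum_{|k|>K}\gamma_k$ rather than your $B^{2j}\Gamma(\cdots)\sum\gamma_k$, and the final exponent becomes $(1-2s)J$ instead of $(3/2-2s)J$; this sharper version is in fact what is quoted in Theorem~\ref{theorembias}. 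Your simpler route is entirely adequate for the lemma as stated and for the application, since the extra $B^{J/2}$ is harmless against the exponential decay in $K$.
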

\begin{proof}
Observe that%
\begin{align*}
&\left\Vert \sum_{j=J_{0}}^{J}\sum_{q=1}^{Q_{j}}\left( \beta _{jq;s}-\beta
_{jq;sK}\right) \psi _{jq;sK}\right\Vert _{L^{2}\left( \mathbb{S}^{1}\right)
}^{2}\\
&\quad\quad\quad\quad\quad\quad \leq \left( J-J_{0}+1\right) \sum_{j=J_{0}}^{J}\left\Vert
\sum_{q=1}^{Q_{j}}\left( \beta _{jq;s}-\beta _{jq;sK}\right) \psi
_{jq;sK}\right\Vert _{L^{2}\left( \mathbb{S}^{1}\right) }^{2}.
\end{align*}%
H\"{o}lder inequality leads to%
\begin{eqnarray*}
\left( \sum_{q=1}^{Q_{j}}\left\vert \left( \beta _{jq;s}-\beta
_{jq;sK}\right) \psi _{jq;sK}\left( \theta \right) \right\vert \right)
^{2} &\leq &\left( \sum_{q=1}^{Q_{j}}\left( \beta _{jq;s}-\beta _{jq;sK}\right)
^{2}\left\vert \psi _{jq;sK}\left( \theta \right) \right\vert \right) \\ && \left(
\sum_{q=1}^{Q_{j}}\left\vert \psi _{jq;sK}\left( \theta \right) \right\vert
\right).
\end{eqnarray*}%
We therefore obtain 
\begin{equation*}
\sum_{q=1}^{Q_{j}}\left\vert \psi _{jq;s,K}\left( \theta \right) \right\vert
\leq \sum_{q=1}^{Q_{j}}\left\vert \psi _{jq;s}\left( \theta \right)
\right\vert \leq CB^{\frac{j}{2}};
\end{equation*}%
so that%
\begin{eqnarray*}
\left\Vert \sum_{q=1}^{Q_{j}}\left( \beta _{jq;s}-\beta _{jq;sK}\right) \psi
_{jq;sK}\right\Vert _{L^{2}\left( \mathbb{S}^{1}\right) }^{2}\!\!\!\!\! &\leq &CB^{%
\frac{j}{2}}\sum_{q=1}^{Q_{j}}\left( \beta _{jq;s}-\beta _{jq;sK}\right)
^{2}\left\Vert \psi _{jq;sK}\right\Vert _{L^{1}\left( \mathbb{S}^{1}\right) }
\\
&\leq &CB^{j}\Gamma \left( 2s+\frac{1}{2},2K^{2}B^{-2j}\right)
\sum_{\left\vert k\right\vert >K}\gamma _{k}.
\end{eqnarray*}%
Using Corollary \ref{ecchecaspitakcor} leads to%
\begin{equation*}
\sum_{j=J_{0}}^{J}B^{j}\Gamma \left( 2s+\frac{1}{2},2K^{2}B^{-2j}\right)
\leq C\left( 2K^{2}\right) ^{2s-\frac{1}{2}}\exp \left( -2K^{2}\right)
B^{-2\left( 2s-1\right) J}.
\end{equation*}%
Hence, we get
\begin{equation*}
I_{3}\leq C_{3}B^{\left( 1-2s\right) J}J^{\frac{1}{2}}K^{s-\frac{1}{4}%
}e^{-2K^{2}}\left( \sum_{\left\vert k\right\vert >K}\gamma _{k}\right) ^{%
\frac{1}{2}}\text{ ,}
\end{equation*}%
as claimed.
\end{proof}

\subsection{Ancillary results related to Theorem \protect\ref{theoremmain}}
In this subsection we will summon auxiliary results connected to the proof Theorem \ref{theoremmain}.
\begin{lemma}
\label{Eproof}Let $E_{1,1}$, $E_{1,2}$, $E_{1,3}$ and $E_{1,4}$ be given
respectively by \eqref{E11}, \eqref{E12}, \eqref{E13} and \eqref{E14}. Then,
there exists $C_{E}>0$ such that%
\begin{equation*}
E_{1,1}+E_{1,2}+E_{1,3}+E_{1,4}\leq C_{E}\left( \frac{n}{\log n}\right) ^{-%
\frac{2r}{2r+1}}.
\end{equation*}
\end{lemma}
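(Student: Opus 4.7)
The plan is to follow the standard Donoho--Johnstone thresholding scheme as adapted to needlet frames in \cite{bkmpAoSb}, treating $E_{1,2}$ and $E_{1,3}$ as ``cross terms'' whose smallness follows from concentration inequalities, and $E_{1,1}$ and $E_{1,4}$ as the ``dominant terms'' whose rate is extracted from the Besov embedding \eqref{besovineq}. The key preliminary observations are: (i) $\widehat\beta_{jq;sK_n}$ is an empirical mean of $n$ i.i.d.\ bounded random variables, so the variance satisfies $\mathbb{E}|\widehat\beta_{jq;sK_n}-\beta_{jq;sK_n}|^2 \leq \|F\|_\infty \|\psi_{jq;sK_n}\|_{L^2}^2/n \leq C\eta_n/n$ by \eqref{normbound}; (ii) since $\tau_n^2=\log n/n$, a Bernstein-type probabilistic inequality (Lemma \ref{probab}) yields $\mathbb{P}(|\widehat\beta_{jq;sK_n}-\beta_{jq;sK_n}|\geq c\kappa\tau_n)\leq n^{-\kappa^2/C}$, a bound that can be made arbitrarily polynomially small in $n$ by choosing $\kappa$ large; (iii) the critical resolution level is $J^{\star}$ defined by $B^{J^{\star}}\simeq (n/\log n)^{1/(2r+1)}$, which is where the bias and stochastic contributions balance.

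For the cross terms $E_{1,2}$ and $E_{1,3}$, I would apply Cauchy--Schwarz to decouple an $L^4$-moment of $\widehat\beta_{jq;sK_n}-\beta_{jq;sK_n}$ (bounded by $C(\eta_n/n)^2$ up to constants coming from $\|\psi\|_{L^4}$) from the probability of the ``bad'' deviation event. On $\{|\widehat\beta|\geq\kappa\tau_n,\,|\beta|\leq\kappa\tau_n/2\}$ one has $|\widehat\beta-\beta|\geq\kappa\tau_n/2$, and symmetrically for $E_{1,3}$; Lemma \ref{probab} then gives $\mathbb{P}\leq n^{-\kappa^2/C}$. Summing over $j\leq J_n$ and $q\leq Q_j\lesssim \eta_n^{-1}B^{J_n}$, one picks up a polynomial factor in $n$ that is absorbed by choosing $\kappa$ large enough, so $E_{1,2}+E_{1,3}=o((n/\log n)^{-2r/(2r+1)})$.

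For $E_{1,1}$ I would replace the indicator $\mathbf{1}_{\{|\widehat\beta|\geq\kappa\tau_n\}}$ by $1$ and bound the conditional variance by $C\eta_n/n$, leaving
\[
E_{1,1}\leq C\,\frac{\eta_n^2}{n}\sum_{j=J_0}^{J_n}\sum_{q=1}^{Q_j}\mathbf{1}_{\{|\beta_{jq;sK_n}|\geq \kappa\tau_n/2\}},
\]
and split the outer sum at $J^{\star}$. For $j\leq J^{\star}$ use the trivial count $Q_j\leq C\eta_n^{-1}B^j$, producing $C\eta_n B^{J^{\star}}/n\lesssim (n/\log n)^{-2r/(2r+1)}$. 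For $j>J^{\star}$ apply the Markov-type inequality $\mathbf{1}_{\{|\beta|\geq c\tau_n\}}\leq (|\beta|/c\tau_n)^{m}$ and then \eqref{besovineq} (in its $L^m$-form) to control $\sum_q|\beta_{jq;sK_n}|^m$; summing the resulting geometric series in $B^{-j(m(r+1/2)-1)}$ (which converges at $j=J^{\star}$ precisely when $r>1/m$) yields the same rate. The term $E_{1,4}$ is handled dually: bound $|\beta|^2\leq (2\kappa\tau_n)^2$ on the event $\{|\beta|\leq 2\kappa\tau_n\}$ and do the same split at $J^{\star}$, using the count $Q_j$ for $j\leq J^{\star}$ and the Besov bound $\sum_q|\beta|^m\leq CB^{-j(m(r+1/2)-1)}$ combined with the factor $(2\kappa\tau_n)^{2-m}$ for $j>J^{\star}$.

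The main obstacle is organising the two regimes in $E_{1,1}$ and $E_{1,4}$ so that the exponents match at $J^{\star}$ and produce the sharp factor $(n/\log n)^{-2r/(2r+1)}$: this requires choosing the Markov exponent equal to the Besov parameter $m$, checking the convergence condition $r>1/m$ (which is precisely the hypothesis of the theorem), and tracking the $\eta_n$-powers carefully since, unlike the tight-frame case of \cite{bkmpAoSb}, the pixel parameter $\eta_n=O(n^{-2/3})$ enters explicitly in the variance bound and in the $L^p$-norms \eqref{normbound}. Once these bookkeeping issues are resolved, the four contributions combine into the claimed bound, with the cross terms being asymptotically negligible and $E_{1,1}$, $E_{1,4}$ jointly saturating the rate.
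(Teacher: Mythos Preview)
Your proposal is correct and follows essentially the same architecture as the paper: the cross terms $E_{1,2},E_{1,3}$ are killed via Cauchy--Schwarz together with the Bernstein-type bound of Lemma~\ref{probab}, while $E_{1,1}$ and $E_{1,4}$ are split at the balancing level $B^{J^{\star}}\simeq(n/\log n)^{1/(2r+1)}$, counting pixels below and invoking the Besov tail above. The one noteworthy difference is that in the high-resolution range you propose the Markov bound $\mathbf{1}_{\{|\beta|\geq c\tau_n\}}\leq(|\beta|/c\tau_n)^{m}$ and the $L^m$-Besov inequality, whereas the paper uses exponent $2$ and \eqref{besovineq} directly; your choice is the one actually dictated by the hypothesis $1\leq m<2$ and makes the convergence condition $r>1/m$ appear transparently, but otherwise the arguments coincide.
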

\begin{proof}
Observe that%
\begin{equation*}
E_{1,1}\leq C_{1}\eta _{n}n^{-1}\left( \sum_{j=0}^{J_n}\sum_{q=1}^{Q_{j}}%
\mathds{1}_{\left\{ \left\vert \beta _{jq;sK_{n}}\right\vert \geq \frac{%
\kappa \tau _{n}}{2}\right\} }\right),
\end{equation*}%
Splitting the sum into two parts by means of the so-called optimal bandwidth selection, given by $J_{1,n}:B^{J_{1,n}}=\left( n/\log n\right) ^{%
\frac{1}{2r+1}}$, and using \eqref{Eineq} yields to 
\begin{equation*}
\eta _{n}\sum_{j=0}^{J_{1,n}}\sum_{q=1}^{Q_{j}}\mathds{1}_{\left\{
\left\vert \beta _{jq;sK_{n}}\right\vert \geq \frac{\kappa \tau _{n}}{2}%
\right\} }\leq CB^{J_{1,n}}\leq C\left( n/\log n\right) ^{\frac{1}{2r+1}},
\end{equation*}%
and, on the other hand,  
\begin{eqnarray*}
\eta _{n}\sum_{j=J_{1,n}}^{J_{n}}\mathds{1}_{\left\{ \left\vert \beta
_{jq;sK_{n}}\right\vert \geq \frac{\kappa \tau _{n}}{2}\right\} } &\leq
&C\eta _{n}\sum_{j=J_{1,n}}^{J_{n}}\sum_{q=1}^{Q_{j}}\left\vert \beta
_{jq;sK_{n}}\right\vert ^{2}\left( \frac{\kappa \tau _{n}}{2}\right) ^{-2} \\
&\leq &C^{\prime }\frac{n}{\log n}B^{-2rJ_{1n}} \\
&\leq &C^{\prime }\left( \frac{n}{\log n}\right) ^{\frac{1}{2r+1}}.
\end{eqnarray*}%
It follows%
\begin{equation*}
E_{1,1}\leq C_{1,1}\left( \frac{n}{\log n}\right) ^{-\frac{2r}{2r+1}}.
\end{equation*}%
As far as $E_{1,2}$ is concerned, standard calculations using \eqref{Peq} lead to%
\begin{eqnarray*}
E_{1,2} &=&\eta _{n}\sum_{j=0}^{J_{n}}\sum_{q=1}^{Q_{j}}\mathbb{E}\left[
\left\vert \widehat{\beta }_{jq;sK_{n}}-\beta _{jq;sK_{n}}\right\vert ^{2}%
\mathds{1}_{\left\{ \left\vert \widehat{\beta }_{jq;sK_{n}}-\beta
_{jq;sK_{n}}\right\vert \geq \kappa \tau _{n}/2\right\} }\right] \\
&\leq &C\eta _{n}\sum_{j=0}^{J_{n}}\sum_{q=1}^{Q_{j}}\mathbb{E}^{\frac{1}{2}}%
\!\left[ \left\vert \widehat{\beta }_{jq;sK_{n}}-\beta _{jq;sK_{n}}\right\vert
^{4}\right] \! \mathbb{P}^{\frac{1}{2}}\!\left[ \left\vert \widehat{\beta }%
_{jq;sK_{n}}-\beta _{jq;sK_{n}}\right\vert \!\geq \!\frac{\kappa \tau _{n}}{2}%
\right] \\
&\leq &C^{\prime }\sum_{j=0}^{J_{n}}B^{j}n^{-1}n^{-\frac{\delta }{2}}\leq
C^{\prime \prime }B^{J_{n}}n^{-1}n^{-\frac{\delta }{2}}\leq C_{1,2}\eta
_{n}\left( \log n\right) ^{-1}n^{-\frac{\delta }{2}}.
\end{eqnarray*}%
On the other hand, we obtain%
\begin{eqnarray*}
E_{1,3} &=&\eta _{n}\sum_{j=0}^{J_{n}}\sum_{q=1}^{Q_{j}}\left\vert \beta
_{jq;sK_{n}}\right\vert ^{2}\mathbb{E}\left[ \mathds{1}_{\left\{ \left\vert 
\widehat{\beta }_{jq;sK_{n}}-\beta _{jq;sK_{n}}\right\vert \geq \kappa \tau
_{n}/2\right\} }\right] \\
&\leq &C_{1,3}n^{-\delta }\left\Vert F\right\Vert _{L^{2}\left( \mathbb{S}%
^{1}\right) }^{2}.
\end{eqnarray*}%
Finally, we get%
\begin{eqnarray*}
E_{1,4} &\leq &C\eta _{n}\sum_{j=0}^{J_{n}}\sum_{q=1}^{Q_{j}}\left\vert
\beta _{jq;sK_{n}}\right\vert ^{2}\mathds{1}_{\left\{ \left\vert \beta
_{jq;sK_{n}}\right\vert <2\kappa \tau _{n}\right\} } \\
&\leq &C\sum_{j=0}^{J_{1,n}}\sum_{q=1}^{Q_{j}}\left\vert 2\kappa \tau
_{n}\right\vert ^{2}+\eta
_{n}\sum_{j=J_{1,n}}^{J_{n}}\sum_{q=1}^{Q_{j}}\left\vert \beta
_{jq;sK_{n}}\right\vert ^{2} \\
&\leq &C^{\prime }\left( B^{J_{1,n}}\left( \frac{n}{\log n}\right)
^{-1}+\sum_{j=J_{1,n}}^{J_{n}}B^{-2rj}\right),
\end{eqnarray*}%
so that%
\begin{equation*}
E_{1,4}\leq C_{1,4}\left( \frac{n}{\log n}\right) ^{-\frac{2r}{2r+1}},
\end{equation*}%
as claimed.
\end{proof}
The next result was originally presented in \cite{bkmpAoSb} as Lemma 16,
hence the proof is here omitted.
\begin{lemma}
\label{probab}Let $\sigma $ be a finite positive constant such that 
\begin{equation*}
\sigma
\geq \left( \left\Vert F\right\Vert _{L^{\infty }\left( \mathbb{S}%
^{1}\right) }\left\Vert \psi _{jq;s}\right\Vert _{L^{2}\left( \mathbb{S}%
^{1}\right) }^{2}\right) ^{\frac{1}{2}} . 
\end{equation*}
Then, there exists constants $c_{P},c_{E},C>0$ such that, for $B^{j}\leq \left( \frac{n}{\log n}\right) ^{%
\frac{1}{2}}$, the following inequalities hold
\begin{align}
&\mathbb{P}\left[ \left\vert \widehat{\beta }_{jq;sK_{n}}-\beta
_{jq;sK_{n}}\right\vert >x\right] \leq 2\exp \left( \frac{nx^{2}}{2\left(
\sigma ^{2}+c_{P}xB^{\frac{j}{2}}\right) }\right);  \label{Pineq} \\
&\mathbb{E}\left[ \left\vert \widehat{\beta }_{jq;sK_{n}}-\beta
_{jq;sK_{n}}\right\vert ^{2}\right] \leq c_{E}n^{-1};  \label{Eineq}\\
\label{Peq}
&\mathbb{P}\left[ \left\vert \widehat{\beta }_{jq;sK_{n}}-\beta
_{jq;sK_{n}}\right\vert >\frac{\kappa \tau _{n}}{2}\right] \leq Cn^{-\delta }
\end{align}%
where $\delta \geq $ $6\sigma ^{2}$.
\end{lemma}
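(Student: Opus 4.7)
The plan is to prove all three bounds by viewing $\widehat{\beta}_{jq;sK_n}-\beta_{jq;sK_n}=\frac{1}{n}\sum_{i=1}^n(Y_i-\mathbb{E}[Y_i])$ with $Y_i:=\overline{\psi}_{jq;sK_n}(X_i)$ and applying a standard Bernstein inequality to these i.i.d. centred bounded random variables. The three preparatory ingredients are: (a) a uniform sup-norm bound on the truncated needlet, (b) a variance bound, and (c) a routine asymptotic simplification of the Bernstein exponent when $x=\kappa\tau_n/2$.

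For step (a), I would bound $\|\psi_{jq;sK_n}\|_{L^\infty(\mathbb{S}^1)}$ by arguing that truncating the Fourier series only decreases the relevant tail contribution, so that the localization estimate in Lemma~\ref{localization} combined with Condition~\ref{areaandcardinality} yields $\|\psi_{jq;sK_n}\|_\infty\le C\sqrt{\lambda_{jq}}\,B^{j}\lesssim B^{j/2}$. This controls $|Y_i-\mathbb{E}[Y_i]|\le 2\|\psi_{jq;sK_n}\|_\infty\le c_P B^{j/2}$, providing the $M$ constant in Bernstein's inequality. For step (b), the variance estimate follows immediately from
\begin{equation*}
\mathrm{Var}(Y_1)\le \mathbb{E}[|\psi_{jq;sK_n}(X_1)|^2]=\int_{\mathbb{S}^1}|\psi_{jq;sK_n}(\theta)|^2F(\theta)\,d\theta\le \|F\|_{L^\infty(\mathbb{S}^1)}\|\psi_{jq;sK_n}\|_{L^2(\mathbb{S}^1)}^2\le \sigma^2,
\end{equation*}
where the last inequality uses \eqref{normbound} and the definition of $\sigma$. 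Bernstein's inequality then delivers \eqref{Pineq} directly, and \eqref{Eineq} follows because $\mathbb{E}[|\widehat{\beta}_{jq;sK_n}-\beta_{jq;sK_n}|^2]=n^{-1}\mathrm{Var}(Y_1)\le \sigma^2/n=:c_E/n$.

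For step (c), the concentration bound \eqref{Peq} is obtained by setting $x=\kappa\tau_n/2=(\kappa/2)\sqrt{\log n/n}$ in \eqref{Pineq}. The numerator in the exponent becomes $nx^2=(\kappa^2/4)\log n$. In the denominator, the crucial observation is that under the hypothesis $B^j\le (n/\log n)^{1/2}$ we have
\begin{equation*}
c_P x B^{j/2}\le \tfrac{c_P\kappa}{2}\sqrt{\tfrac{\log n}{n}}\Bigl(\tfrac{n}{\log n}\Bigr)^{1/4}=\tfrac{c_P\kappa}{2}(\log n)^{1/4}n^{-1/4}\xrightarrow[n\to\infty]{}0,
\end{equation*}
so this term is asymptotically negligible compared to $\sigma^2$. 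Consequently the exponent is $\le -\kappa^2\log n/(16\sigma^2+o(1))$, giving the stated polynomial decay $Cn^{-\delta}$ with $\delta$ as large as one wishes provided $\kappa$ is fixed sufficiently large, which is consistent with the admissibility condition $\delta\ge 6\sigma^2$ recorded in the statement.

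The main obstacle I anticipate is purely bookkeeping rather than conceptual: one must make the sup-norm bound on $\psi_{jq;sK_n}$ (rather than on the full needlet $\psi_{jq;s}$) rigorous, since truncation in the frequency domain is not automatically norm-decreasing pointwise. I would handle this either by a direct estimate that mimics the Poisson-summation argument of Lemma~\ref{localization} with the sum restricted to $|k|\le K$, or by splitting $\psi_{jq;sK_n}=\psi_{jq;s}-(\psi_{jq;s}-\psi_{jq;sK_n})$ and controlling the remainder using the Gaussian-gamma tail estimate of Lemma~\ref{ecchecaspitak}; both yield the same order $B^{j/2}$. Once this is in hand the remainder of the argument is mechanical.
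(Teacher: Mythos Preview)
The paper does not actually prove this lemma: it states that the result ``was originally presented in \cite{bkmpAoSb} as Lemma 16, hence the proof is here omitted.'' Your proposal supplies precisely the standard Bernstein-inequality argument that underlies that reference, and the three steps (sup-norm bound via localization, variance bound via $\|F\|_\infty$, asymptotic simplification of the exponent under $B^j\le(n/\log n)^{1/2}$) are all correct in outline. Two minor remarks: the exponent in \eqref{Pineq} is missing a minus sign in the paper's own statement (an evident typo), and your denominator constant $16\sigma^2$ should be $8\sigma^2$ after the substitution, but neither affects the conclusion. Your handling of the truncated-versus-full needlet issue via the splitting $\psi_{jq;sK_n}=\psi_{jq;s}-(\psi_{jq;s}-\psi_{jq;sK_n})$ together with Lemma~\ref{ecchecaspitak} is the cleanest route and is entirely adequate here.
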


\noindent \textbf{Acknowledgements} - The author whishes to thank D. Marinucci and I.Z. Pesenson for the useful suggestions and discussions.



\end{document}